	\def\MR#1{}
\newcommand{\Z}{\mathbb{Z}}
\newcommand{\N}{\mathbb{N}}
\newcommand{\R}{\mathbb{R}}
\newcommand{\Q}{\mathbb{Q}}
\newcommand{\F}{\mathbb{F}}
\newtheorem{thm}{Theorem}
\numberwithin{thm}{section}
\newtheorem{conj}[thm]{Conjecture}
\newtheorem{prop}[thm]{Proposition}
\newtheorem{lemma}[thm]{Lemma}
\newtheorem{cor}[thm]{Corollary}
\newtheorem{question}[thm]{Question}
\newtheorem*{namedtheorem}{\theoremname}
\newcommand{\theoremname}{testing}
\theoremstyle{definition}
\newtheorem*{nameddef}{\defname}
\newcommand{\defname}{testing}
\theoremstyle{remark}
\newtheorem{rmk}[thm]{Remark}
\begin{document}
	\title{Positive Knots and Ribbon Concordance}
	\author{Joe Boninger}
	\address{Department of Mathematics, Boston College, Chestnut Hill, MA}
	\email{boninger@bc.edu}
	\maketitle
	
	\begin{abstract}
		Ribbon concordances between knots generalize the notion of ribbon knots. Agol, building on work of Gordon, proved ribbon concordance gives a partial order on knots in $S^3$. In previous work, the author and Greene conjectured that positive knots are minimal in this ordering. In this note we prove this conjecture for a large class of positive knots, and show that a positive knot cannot be expressed as a non-trivial band sum---both results extend earlier theorems of Greene and the author for special alternating knots. In a related direction, we prove that if positive knots $K$ and $K'$ are concordant and $|\sigma(K)| \geq 2g(K) - 2$, then $K$ and $K'$ have isomorphic rational Alexander modules. This strengthens a result of Stoimenow, and gives evidence toward a conjecture that any concordance class contains at most one positive knot.
	\end{abstract}
	
	\section{Introduction}
	
	A {\em smooth concordance} between knots $K_0, K_1 \subset S^3$ is a smooth, properly embedded cylinder $C \subset S^3 \times I$ such that $C \cap (S^3 \times \{i\}) = K_i$ for $i = 0,1$. Perturbing $C$ if necessary, we assume the height function $h : C \hookrightarrow S^3 \times I \to I$ is Morse, and we say $C$ is a {\em ribbon concordance from $K_1$ to $K_0$} if $h$ has no critical points of index two. If such a concordance exists, we say $K_1$ is {\em ribbon concordant} to $K_0$ and we write $K_0 \leq K_1$. This terminology generalizes the notion of a ribbon knot, since a knot is ribbon if and only if it is ribbon concordant to the unknot.
	
	In a now-classic paper, Gordon conjectured that ribbon concordance induces a partial ordering on the set of knots \cite{gor81}. This conjecture was settled in the affirmative by Agol \cite{ago22}, and many authors have shown that ribbon concordance places strong constraints on knot invariants. To give just a few examples, if $K_0 \leq K_1$, then:
	\begin{itemize}
		\item The Alexander polynomial $\Delta_{K_0}$ of $K_0$ divides $\Delta_{K_1}$ \cite{gil84,fp20}.
		\item The genus $g(K_0)$ of $K_0$ is less than or equal to $g(K_1)$ \cite{zem19}.
		\item If $K_1$ is fibered, then $K_0$ is as well \cite{miy18, zem19}.
	\end{itemize}

	Gordon also proved torus knots are minimal under ribbon concordance \cite{gor81}. Torus knots are examples of {\em positive knots}, knots admitting a diagram in which all crossings are positive. This motivated the author and Josh Greene to conjecture:
	\begin{conj}[{\cite[Conjecture 1.6]{bogr24},\cite[Question 1.3]{tag23}}]
		\label{conj:main}
		If $K_1 \subset S^3$ is a positive knot and $K_0 \leq K_1$, then $K_0 \cong K_1$.
	\end{conj}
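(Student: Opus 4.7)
The plan is to derive progressively finer constraints on $K_0$ from the existence of a ribbon concordance to $K_1$, and to argue that these constraints are rigid enough to force $K_0 \cong K_1$. Since the statement is a conjecture and is proved in the paper only for a subclass of positive knots, the outline below should be understood as a strategy with a deliberately flagged obstruction rather than a complete argument.

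First I would show that $K_0$ has the same genus as $K_1$. Ribbon concordance is in particular a smooth concordance, so the Ozsv\'ath--Szab\'o invariant satisfies $\tau(K_0) = \tau(K_1)$, and for a positive knot one has $\tau(K_1) = g(K_1)$ via the slice--Bennequin inequality. Combined with Zemke's inequality $g(K_0) \leq g(K_1)$ and the general bound $\tau(K_0) \leq g(K_0)$, this forces $g(K_0) = g(K_1)$. In the fibered case (positive braids) Miyazaki and Zemke additionally force $K_0$ to be fibered, and the divisibility $\Delta_{K_0} \mid \Delta_{K_1}$ with matching degrees then yields $\Delta_{K_0} = \Delta_{K_1}$.

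Next I would try to match Seifert data. A positive diagram of $K_1$ produces, via Seifert's algorithm, a minimal-genus Seifert surface $F_1$ whose Seifert matrix $V_1$ has the distinctive property that $V_1 + V_1^{T}$ is positive definite. Decomposing $C$ into $0$- and $1$-handles built on $K_0$, and using that no index-two critical points appear, one should be able to push $F_1$ downward through $C$ and compress it along the cocores of the $1$-handles to produce a minimal-genus Seifert surface $F_0$ for $K_0$. A careful accounting of the compression, using Levine--Zemke's injectivity of the ribbon-concordance map on knot Floer homology together with the positivity of $V_1 + V_1^{T}$, should show that the resulting Seifert form is $S$-equivalent to $V_1$, strengthening the rational Alexander module isomorphism of the abstract to an integral one.

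The principal obstacle, and the reason the conjecture remains open in full generality, is the final passage from Seifert form equality to knot equivalence: distinct knots can share a Seifert form. The most plausible route is to combine the injectivity of the induced map on $\widehat{HFK}$ or Khovanov homology with a structural theorem that recovers a positive diagram of $K_1$ from algebraic data. For special alternating knots Boninger--Greene carried this out using the rigidity of the Goeritz form as a root lattice; replicating such rigidity for a general positive knot, where the analogue of the Goeritz form is the intersection form of a suitable resolution of the positive surface, is exactly the content of the conjecture and is where the principal difficulty lies.
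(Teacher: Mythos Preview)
Your outline correctly isolates the genus step and correctly flags that the full conjecture is open; what is missing is the mechanism the paper actually uses to pass from equality of genera and Alexander polynomials to $K_0 \cong K_1$. The paper does not attempt to match Seifert forms or to recover a positive diagram from algebraic data. Instead, for the subclass it handles (leading coefficient of $\Delta_{K_1}$ a prime power, Theorem~\ref{thm:main_two}), it invokes Gordon's criterion: if the commutator subgroup of $\pi_1(S^3 \setminus K_1)$ is residually nilpotent and $d(K_0) = d(K_1)$, then $K_0 \cong K_1$. Residual nilpotence is supplied by a theorem of Mayland--Murasugi for pseudo-alternating knots with prime-power leading Alexander coefficient, and the degree equality comes from combining Zemke's injectivity on $\widehat{HFK}$ with the Cheng--Hedden--Sarkar result that the top Alexander grading of a pseudo-alternating knot is supported in a single Maslov grading, so that $d(K_0) = 2g(K_0) = 2g(K_1) = d(K_1)$. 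None of this group-theoretic input appears in your sketch.

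Your proposed second step---pushing $F_1$ through $C$ and compressing along cocores of the $1$-handles to produce a Seifert surface for $K_0$ with $S$-equivalent form---is not a well-defined construction as written: the $1$-handles of $C$ are bands attached to $K_0$ together with an unlink, and there is no reason their cocores meet $F_1$ in arcs along which one can compress. Even granting some version of it, $S$-equivalence of Seifert forms is far too weak to force $K_0 \cong K_1$, as you concede; so this step contributes nothing beyond the Alexander-module information already available from Gilmer's divisibility together with the degree equality above. The obstruction you name at the end is genuine, but the paper sidesteps it entirely via the residual-nilpotence route, at the price of the prime-power hypothesis on the leading coefficient.
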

	\Cref{conj:main} was also posed independently by Tagami, who proved it for positive two-bridge knots \cite{tag23}. The author and Greene proved the conjecture for fibered positive knots, following a similar result of Baker and Motegi \cite{bamo17}.
	
	In this note we prove two theorems in support of \Cref{conj:main}. First, given a two-component split link $K_0 \sqcup K_1 \subset S^3$ and an embedded band $b = I \times I \subset S^3$ satisfying $b \cap K_i = I \times \{i\}$ for $i = 0,1$, we define the {\em band sum} $K_0 \#_b K_1 \subset S^3$ by
	$$
	K_0 \#_b K_1 = (K_0 \cup K_1 - I \times \partial I) \cup \partial I \times I.
	$$
	This band sum is {\em trivial} if there exists a sphere $\Sigma \subset S^3 - (K_0 \cup K_1)$ which intersects $b$ in a single arc---in this case $K_0 \#_b K_1 \cong K_0 \# K_1$, the ordinary connect sum. We say a knot is {\em band prime} if it cannot be written as a non-trivial band sum.
	
	Miyazaki proved any band sum $K_0 \#_b K_1$ is ribbon concordant to the connect sum $K_0 \# K_1$ \cite{miy98}. Thus, band sums are a natural way in which ribbon concordances arise. Additionally, if $K' \leq K$ via a ribbon concordance with only two critical points, then basic Morse theory shows $K$ is equivalent to a band sum of $K'$ and an unknot. We prove:
	
	\begin{thm}
		\label{thm:main_one}
		Positive knots are band prime.
	\end{thm}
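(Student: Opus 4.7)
Suppose $K = K_0 \#_b K_1$ is a positive knot; we aim to show the band sum is trivial. By Miyazaki's theorem, $K_0 \# K_1 \le K$, so the ribbon-concordance invariants listed in the introduction apply to this pair. Zemke's genus bound yields $g(K) \ge g(K_0 \# K_1) = g(K_0) + g(K_1)$. Conversely, gluing minimal-genus Seifert surfaces for $K_0$ and $K_1$ along $b$ produces a Seifert surface for $K$ of genus $g(K_0) + g(K_1)$, so in fact $g(K) = g(K_0) + g(K_1)$. The Gilmer--Friedl--Powell divisibility $\Delta_{K_0}\Delta_{K_1} \mid \Delta_K$, combined with Cromwell's theorem that $\deg \Delta_K = 2g(K)$ for positive knots, then forces $\Delta_K = \Delta_{K_0}\Delta_{K_1}$ up to units, with $\deg \Delta_{K_i} = 2g(K_i)$ for each $i$. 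In particular, each factor $K_i$ is already ``as simple as'' a positive knot of its genus.

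To promote these invariant equalities to a geometric triviality statement, I would exploit the rigidity of minimal-genus Seifert surfaces of positive knots. Let $F$ be the Seifert surface obtained by Seifert's algorithm on a positive diagram of $K$; then $F$ has minimal genus $g(K)$. The plan is to isotope $b$ into a position compatible with $F$, so that $F$ decomposes as $F = F_0 \cup b \cup F_1$ with $F_i$ a minimal-genus Seifert surface for $K_i$. Given such a decomposition, $F_0$ and $F_1$ lie in disjoint $3$-balls joined by $b$, and the sphere separating these balls meets $b$ in a single arc---witnessing the triviality of the band sum.

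The main obstacle is the rigidity step: showing that $b$ can in fact be isotoped to lie compatibly with some positive Seifert surface $F$. A natural tool is sutured manifold theory applied to the complement of $F$, whose structure is particularly constrained when $K$ is positive; one could alternatively try to track the position of $b$ through the Kakimizu complex of minimal-genus Seifert surfaces of $K$, using the fact that all of its vertices carry the same Alexander-module data by the degree computation above. As a second avenue, Zemke's ribbon-concordance map on knot Floer homology must already be maximally restrictive given the invariant equalities, and combining this with the two-critical-point Morse structure of Miyazaki's concordance (one minimum and one saddle) may suffice to pin down $b$ directly. Either way, the conceptual crux is that positivity of $K$ should rigidify the Seifert-surface data enough to exclude any non-trivial band-sum decomposition.
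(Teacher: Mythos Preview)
Your opening moves are sound and match the paper: Miyazaki gives $K_0\#K_1 \le K$, and genus additivity $g(K)=g(K_0)+g(K_1)$ follows. But from there your proposal does not close the gap, and in one place it asserts something false.

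First, the Alexander-polynomial computation is a detour: it plays no role in the paper's argument and does not help with the geometric step. Second, the decomposition you want---a minimal-genus Seifert surface $S$ for $K$ written as $S_0 \#_b S_1$---is not something you need to manufacture by hand. It is supplied directly by a theorem of Gabai: a band sum preserves genus if and only if such a decomposed minimal-genus surface exists. This is the first ingredient you are missing.

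More seriously, your sentence ``Given such a decomposition, $F_0$ and $F_1$ lie in disjoint $3$-balls joined by $b$, and the sphere separating these balls meets $b$ in a single arc'' is not justified and is essentially the entire theorem. By definition of band sum there is \emph{some} sphere $\Sigma$ separating $S_0$ from $S_1$, but a priori $\Sigma$ may meet $b$ in many parallel arcs; reducing that number to one is exactly what ``the band sum is trivial'' means. Your proposed tools (sutured manifolds in the abstract, the Kakimizu complex, or Zemke's cobordism map) are not made to do anything concrete here.

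The paper's actual mechanism is different and uses positivity in a specific way you do not invoke: Ozawa's theorem that every incompressible Seifert surface of a positive knot is \emph{free}, so the complement $H=S^3\setminus\nu(S)$ is a handlebody. One then minimizes $|\Sigma\cap S|$ over separating spheres, checks that $\Sigma\cap H$ is an incompressible planar surface in $H$ with parallel separating boundary curves, and proves a lemma (Lemma~3.2 in the paper) that any such surface in a handlebody is a disk or a boundary-parallel annulus. Since $|\Sigma\cap S|$ is odd, it must be a disk, forcing $|\Sigma\cap S|=1$ and hence triviality. The crucial positivity input is Ozawa's freeness theorem, not Cromwell's degree formula.
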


	We also verify \Cref{conj:main} for a large class of positive knots.
	
	\begin{thm}
		\label{thm:main_two}
		Let $K \subset S^3$ be a positive knot. If the leading coefficient of $\Delta_K$ is a prime power, then $K$ is ribbon concordance minimal.
	\end{thm}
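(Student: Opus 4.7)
The plan is to show that the hypotheses force $\Delta_{K_0} = \Delta_K$ and $g(K_0) = g(K)$, and then to deduce $K_0 \cong K$ using knot Floer homology. Since the case where $K$ is fibered was already handled by the author and Greene in \cite{bogr24}, and a positive knot is fibered precisely when $\mathrm{lc}(\Delta_K) = 1$ by Stallings and Cromwell, I may assume $\mathrm{lc}(\Delta_K) = p^k$ for a prime $p$ and integer $k \geq 1$.

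Suppose $K_0 \leq K$. Three known consequences give the first layer of constraints. By \cite{gil84, fp20}, $\Delta_{K_0}$ divides $\Delta_K$ in $\mathbb{Z}[t, t^{-1}]$; writing $\Delta_K(t) = Q(t)\Delta_{K_0}(t)$, one has $\mathrm{lc}(\Delta_{K_0}) = p^j$ and $\mathrm{lc}(Q) = p^{k-j}$ for some $0 \le j \le k$. Rasmussen's $\tau$-invariant is a concordance invariant with $\tau(K) = g(K)$ for positive $K$ and $\tau \le g$ in general, so $g(K_0) \ge \tau(K_0) = \tau(K) = g(K)$; combined with Zemke's $g(K_0) \le g(K)$ \cite{zem19}, this forces $g(K_0) = g(K)$. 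By Cromwell's theorem, $\deg \Delta_K = 2g(K) = 2g(K_0)$, hence $\deg \Delta_{K_0} \le \deg \Delta_K$.

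The crucial step is to show $j = k$, i.e.\ that $Q$ is a unit. The plan is to use the rational Alexander module splitting under ribbon concordance from \cite{fp20}, which gives $A(K)_{\mathbb{Q}} \cong A(K_0)_{\mathbb{Q}} \oplus M$ as $\mathbb{Q}[t,t^{-1}]$-modules, with $\mathrm{ord}(M) = Q(t)$ up to units. If $j < k$, then $M$ is non-trivial with order divisible by $p$; the plan is to interpret this splitting, through the births and saddle bands of the ribbon concordance, as exhibiting $K$ as a non-trivial band sum of $K_0$ with an unknot (or iterated band sum), thereby contradicting Theorem~\ref{thm:main_one}. Once $Q$ is a unit one has $\Delta_{K_0} = \Delta_K$, and with equal genera $K_0 \cong K$ will follow from Zemke's knot Floer homology summand property \cite{zem19} applied in the top Alexander grading, together with the rigidity of the top \(\widehat{HFK}\) group for positive knots (where $\tau = g$ forces the summand inclusion to be an isomorphism).

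The main obstacle is the first step of the previous paragraph: converting the module-theoretic non-triviality of $M$ into a genuine band-sum decomposition of $K$ to which Theorem~\ref{thm:main_one} applies. This is where the prime-power hypothesis is essential, since it concentrates the non-monic part of $\Delta_K$ at the single prime $p$, making any splitting of $A(K)$ visible at the level of saddle bands; without it, $\Delta_K$ could split into factors with non-trivial leading coefficients in several inequivalent ways, and the module-theoretic approach on its own would not close the argument.
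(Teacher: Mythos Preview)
Your proposal has two genuine gaps, and in fact misses the mechanism by which the prime-power hypothesis is actually used.

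First, the ``crucial step'' is not a proof but a hope, and you say as much. A ribbon concordance from $K$ to $K_0$ in general has many births and many saddles; only when there is exactly one of each does $K$ arise as a band sum of $K_0$ with an unknot. There is no known way to promote a non-trivial $\mathbb{Q}[t,t^{-1}]$-summand $M$ of the rational Alexander module into an honest band-sum decomposition of $K$, and the prime-power condition on $\mathrm{lc}(\Delta_K)$ does not help here: it constrains how $\Delta_K$ factors, not how the concordance is built. So Theorem~\ref{thm:main_one} cannot be invoked, and this step does not go through.

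Second, even granting $\Delta_{K_0}=\Delta_K$ and $g(K_0)=g(K)$, your final step is simply false: neither the Alexander polynomial nor knot Floer homology determines a knot, and Zemke's injection $\widehat{HFK}(K_0)\hookrightarrow\widehat{HFK}(K)$ being an isomorphism in every grading would still not yield $K_0\cong K$. The paper closes the argument in a completely different way. By the Cheng--Hedden--Sarkar result, the top Alexander grading of $\widehat{HFK}$ of a pseudo-alternating knot is supported in a single Maslov grading; combined with Zemke's injection and $g(K_0)=g(K)$, this gives $d(K_0)=d(K)$ and hence $\Delta_{K_0}=\Delta_K$. The prime-power hypothesis then enters through the Mayland--Murasugi theorem: a pseudo-alternating knot whose leading Alexander coefficient is a prime power has residually nilpotent commutator subgroup. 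Finally, Gordon's lemma says that if $K_0\le K$, $K$ is residually nilpotent, and $d(K_0)=d(K)$, then $K_0\cong K$. That group-theoretic step is what actually produces the isotopy; no Floer-theoretic rigidity can replace it.
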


	We list \Cref{thm:main_two} second because its proof is somewhat simpler than that of \Cref{thm:main_one}. In \cite{bogr24}, the author and Greene proved Theorems \ref{thm:main_one} and \ref{thm:main_two} for all special alternating knots; since special alternating knots are positive (up to mirroring), Theorems \ref{thm:main_one} and \ref{thm:main_two} are broad extensions of those results. Additionally, while the proof that special alternating knots are band prime in \cite{bogr24} has a combinatorial flavor, our proof of \Cref{thm:main_one} is more geometric. We use a theorem of Ozawa stating any incompressible Seifert surface of a positive knot is {\em free} \cite{oza02}, and we show that such a Seifert surface cannot witness a non-trivial, genus-preserving band sum.
	
	In a related direction, we consider the following question of Gordon:
	
	\begin{question}[{\cite[Question 6.1]{gor81}}]
		\label{ques:unique}
		Does every smooth concordance class contain a unique representative which is minimal with respect to ribbon concordance?
	\end{question}
	
	Affirmative answers to this question and to \cite[Question 6.2]{gor81} would imply a generalization of the Slice-Ribbon Conjecture. \Cref{ques:unique} also seems closely related to conjectures made independently by other authors: Rudolph conjectured that each concordance class contains at most one algebraic knot \cite{rudolph76} and Baker conjectured each concordance class contains at most one fibered knot supporting the tight contact structure \cite{bak16}. Most relevant to us, Stoimenow conjectured each concordance class contains finitely many positive knots \cite{sto15}---this was verified by Baader, Dehornoy and Liechti \cite{bdl18}. Considering \Cref{conj:main} and \Cref{ques:unique}, it is natural to posit:
	
	\begin{conj}[\cite{sto15}]
		\label{conj:unique}
		Every smooth concordance class contains at most one positive knot.
	\end{conj}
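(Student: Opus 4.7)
The plan is to attack the conjecture by progressively narrowing which positive knots can coexist in a single concordance class. The starting observation is that for positive knots the slice genus coincides with the Seifert genus (by the resolution of the local Thom conjecture due to Kronheimer--Mrowka and Rudolph), so any two concordant positive knots $K$ and $K'$ automatically share Seifert genus and signature, and their Alexander polynomials are constrained by the Fox--Milnor relation $\Delta_K(t)\,\Delta_{K'}(t) = f(t)f(t^{-1})$.

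My first step would be to strengthen the paper's rational Alexander module theorem by removing the hypothesis $|\sigma(K)| \geq 2g(K) - 2$. A concordance $C$ from $K$ to $K'$ can be capped off by chosen Seifert surfaces $F, F'$ to produce a closed surface in a rational homology $4$-ball, from which one extracts a cobordism of Seifert forms. Taking $F$ and $F'$ to be \emph{incompressible} Seifert surfaces, Ozawa's freeness theorem (the very tool used in the proof of \refthm{main_one}) together with the strong positivity of the Seifert form on a positive surface should provide enough algebraic rigidity to upgrade the paper's argument to all positive pairs. Combining the resulting unconditional Alexander-module invariance with the Baader--Dehornoy--Liechti finiteness theorem then reduces Conjecture~\ref{conj:unique} to a separation problem among finitely many positive knots that agree in genus, signature, and rational Alexander module.

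The main obstacle, and the reason the conjecture remains open, is precisely this final separation step: distinct positive knots may share every currently available concordance invariant (genus, signature, Alexander polynomial, HOMFLY, $\tau$, $\nu^+$, and so on), so finishing the argument seems to require a genuinely new geometric obstruction. A plausible avenue is to imitate the proof of \refthm{main_one} directly. Given a concordance $C$ between positive knots $K$ and $K'$, use Ozawa to select free positive Seifert surfaces $F, F'$ at the two ends, cap $C$ with them, and then, by a handle-reorganization along $C$ in the spirit of Agol's proof that ribbon concordance is a partial order, try to show the resulting surface cobordism must be a product. Translating that geometric picture into an honest equivalence $K \cong K'$---rather than just an isomorphism of invariants---is where all of the difficulty lies, and would constitute the heart of any proof of Conjecture~\ref{conj:unique}.
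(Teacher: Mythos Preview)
The statement you are addressing is \Cref{conj:unique}, which the paper records as an \emph{open conjecture} and does not prove; the paper offers \Cref{thm:main_three} only as evidence, and explicitly notes that dropping the hypothesis $|\sigma(K)|\ge d(K)-2$ would already require \Cref{conj:two} (that positive knots are $\Q$-anisotropic), itself left open. There is therefore no proof in the paper to compare your proposal against.

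Your proposal is likewise not a proof: you yourself flag the ``final separation step'' as the place where ``all of the difficulty lies'' and say it ``would constitute the heart of any proof.'' What you have written is a research outline with an acknowledged hole at the decisive point.

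Even as an outline, two of the intermediate steps are not well grounded. First, your plan to remove the signature hypothesis from \Cref{thm:main_three} is exactly \Cref{conj:two}, and the mechanism you propose---Ozawa's freeness theorem together with ``strong positivity of the Seifert form on a positive surface''---does not do this as stated. Seifert forms of positive knots are not positive-definite in general (they are not even symmetric), and freeness of $S^3\setminus S$ is a statement about $\pi_1$, not about the Seifert pairing; it is unclear how either ingredient would force $\Q$-anisotropy. The paper's own partial result, \Cref{prop:q_anis}, proceeds instead through the Milnor form and the absence of rational roots (\Cref{cor:roots}), not through any positivity of the Seifert form. Second, the appeal to a ``handle-reorganization along $C$ in the spirit of Agol's proof'' is too vague to assess: Agol's argument is about representation varieties and uses the ribbon hypothesis in an essential way, and no mechanism is offered for transporting it to an arbitrary concordance between positive knots.
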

	
	We credit Stoimenow since \Cref{conj:unique} seems implicit in his work. For any knot $K$, let $d(K)$ denote the degree of $\Delta_K$ when normalized to have no negative exponents. As evidence of \Cref{conj:unique}, we prove:
	
	\begin{thm}
		\label{thm:main_three}
		Let $K$ and $K'$ be (topologically or algebraically) concordant positive knots. If $K$ satisfies $|\sigma(K)| \geq d(K) - 2$, where $\sigma$ denotes the signature, then the rational Alexander modules of $K$ and $K'$ are isomorphic.
	\end{thm}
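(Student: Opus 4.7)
The plan is to pass through the rational Blanchfield form. Since topological (or algebraic) concordance of $K$ and $K'$ implies that the rational linking forms $(A_K \otimes \mathbb{Q}, \mathrm{Bl}_K)$ and $(A_{K'} \otimes \mathbb{Q}, \mathrm{Bl}_{K'})$ are Witt equivalent over $\mathbb{Q}[t, t^{-1}]$, it suffices to prove that, under the hypothesis, both Blanchfield forms are anisotropic. Standard Witt theory over the principal ideal domain $\mathbb{Q}[t, t^{-1}]$ then upgrades Witt equivalence of anisotropic forms to isometry, yielding in particular an isomorphism of $\mathbb{Q}[t, t^{-1}]$-modules $A_K \otimes \mathbb{Q} \cong A_{K'} \otimes \mathbb{Q}$.

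First I would invoke Fox--Milnor, $\Delta_K(t)\,\Delta_{K'}(t) \doteq f(t)\, f(t^{-1})$, together with Cromwell's equality $d(J) = 2g(J)$ for every positive knot $J$ and the concordance invariance $\sigma(K) = \sigma(K')$, to conclude that $d(K') \geq d(K) - 2$. Decomposing the Blanchfield form into primary components indexed by irreducible factors of $\Delta_K$, the non-reciprocal pairs are automatically Witt-trivial, so the analysis reduces to the reciprocal primary parts. The central input is the sign monotonicity of the Tristram--Levine signature function $\omega \mapsto \sigma_K(\omega)$ for positive knots: all of its nontrivial jumps have the same sign, and each reciprocal irreducible factor of $\Delta_K$ whose roots lie on $S^1$ contributes a definite signed jump proportional to the signature of $\mathrm{Bl}_K$ restricted to that primary summand. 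The bound $|\sigma(K)| \geq d(K) - 2$ then forces nearly all of $\deg \Delta_K$ to be accounted for by on-circle roots whose Blanchfield signs all agree, making the restriction of $\mathrm{Bl}_K$ to each such primary component sign-definite, hence anisotropic; the two-unit slack leaves room for at most a single reciprocal prime with roots off $S^1$ (a Salem-type factor of degree two over $\mathbb{Q}$), whose primary part is cyclic and anisotropic by direct inspection.

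Running the parallel analysis for $K'$ requires combining Fox--Milnor with positivity to conclude $d(K') = d(K)$ and that $(A_{K'} \otimes \mathbb{Q}, \mathrm{Bl}_{K'})$ is likewise anisotropic; together with Witt equivalence this yields the desired isomorphism. The main obstacle I anticipate is the delicate accounting at the self-reciprocal primes $t \pm 1$, where the Blanchfield pairing has a subtler sign theory and where the two units of slack in $|\sigma(K)| \geq d(K) - 2$ must be spent carefully so as not to produce a non-cyclic $(t+1)$-primary part on either side. Balancing this bookkeeping simultaneously for $K$ and $K'$ through the Fox--Milnor factorization, while keeping control of the total degree via Cromwell and the positivity of the leading coefficient, is where I expect the proof to demand the most work.
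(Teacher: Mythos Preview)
Your overall framework---show that both rational Blanchfield forms are anisotropic and then invoke Witt rigidity (equivalently, Kervaire--Gilmer)---matches the paper's strategy exactly. The paper phrases this as ``$\mathbb{Q}$-anisotropy'' of $H^1(\bar X;\mathbb{Q})$ with its cup product, but that is the same condition. The paper also gets $d(K)=d(K')$ more directly than Fox--Milnor: for positive knots $d=2g=2g_4$, and $g_4$ is a concordance invariant, so $|\sigma(K')|\geq d(K')-2$ holds automatically and the argument is symmetric.

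Where your proposal diverges, and where I see a genuine gap, is in the mechanism for establishing anisotropy. You take as ``central input'' that for a positive knot the Tristram--Levine signature function has all jumps of one sign. This is not a known theorem for general positive knots, and you would need to prove it; the existing literature gives linear lower bounds on $|\sigma|$ in terms of $g$ (Baader--Dehornoy--Liechti) but not monotonicity of $\omega\mapsto\sigma_\omega$. Without that input your primary-decomposition bookkeeping does not get off the ground. (As a side note, your worry about the primes $t\pm 1$ is unnecessary: $\Delta_K(1)=\pm 1$ and $\Delta_K(-1)$ is the determinant, which is odd, so neither is ever a root for a knot.)

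The paper's route to anisotropy is shorter and avoids any jump analysis. Cromwell's positivity of the Conway coefficients implies $\Delta_K$ has no \emph{positive} real root; combined with an elementary lemma that any rational root of a knot's Alexander polynomial must have the form $(a-1)/a$ and hence be positive, this gives that $\Delta_K$ has no rational root at all. Consequently the deck-transformation action on $H^1(\bar X;\mathbb{Q})$ has no one-dimensional invariant subspace, so any nontrivial invariant isotropic subspace $\Lambda$ has $\dim\Lambda\geq 2$. Feeding this into the Milnor form (whose signature equals $\sigma(K)$) forces $|\sigma(K)|\leq d(K)-4$, contradicting the hypothesis. If you want to salvage your approach, the cleanest fix is to replace the Tristram--Levine monotonicity claim with this ``no rational roots'' argument; it plugs directly into your Blanchfield framework, since a one-dimensional invariant isotropic submodule is exactly a rational eigenvector of $t$.
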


	By the {\em rational Alexander module} of $K$ we mean the cohomology ring $H^*(\bar{X};\Q)$, where $\bar{X}$ denotes the infinite cyclic cover of the exterior of $K$, viewed as a module over the group of deck transformations. \Cref{thm:main_three} strengthens a result of Stoimenow, which concluded under the above hypotheses that $K$ and $K'$ have the same Alexander polynomial \cite[Theorem 4.5]{sto15}. Additionally, although the hypothesis that
	\begin{equation}
		\label{eq:condition}
		|\sigma(K)| \geq d(K) - 2
	\end{equation}
	is somewhat restrictive, it is known that the signatures of positive knots are linearly bounded from below by their genus \cite{bdl18}. In fact, (\ref{eq:condition}) holds for all positive knots with genus less than or equal to four with the single exception of the knot $14_{45657}$ \cite[Theorem 2.4]{sto15}\cite{cogo88, sto04, sto16}.
	
	\begin{cor}
		\label{cor:genus}
		Let $K$ and $K'$ be (topologically or algebraically) concordant positive knots. If $g(K) \leq 4$, then the rational Alexander modules of $K$ and $K'$ are isomorphic.
	\end{cor}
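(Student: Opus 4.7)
The plan is to deduce \Cref{cor:genus} directly from \Cref{thm:main_three} together with the tabulated facts already cited in the discussion above. Three inputs are needed:
\begin{enumerate}
\item For any positive knot $K$ one has $d(K) = 2g(K)$ (Cromwell), so the hypothesis $|\sigma(K)| \geq d(K) - 2$ of \Cref{thm:main_three} becomes $|\sigma(K)| \geq 2g(K) - 2$.
\item By the results of Stoimenow and Cochran--Gompf cited in the paragraph preceding \Cref{cor:genus}, this bound holds for every positive knot of genus at most four, with the single exception $K = 14_{45657}$.
\item For positive knots, $g(K) = g_4(K)$ (Rudolph, via Kronheimer--Mrowka), so $g$ is a concordance invariant on the class of positive knots.
\end{enumerate}

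First I would handle the generic case: if $K \not\cong 14_{45657}$, then combining (1) and (2) shows $K$ satisfies the hypothesis of \Cref{thm:main_three}, and the conclusion is immediate.

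To deal with the exceptional knot, I would use the symmetry of the conclusion of \Cref{thm:main_three} in $K$ and $K'$, and swap their roles. Suppose $K \cong 14_{45657}$. By (3), $g(K') = g_4(K') = g_4(K) = g(K) \leq 4$. If $K' \not\cong 14_{45657}$, then applying \Cref{thm:main_three} with $K'$ in the role of $K$ gives the desired isomorphism of rational Alexander modules. If instead $K' \cong 14_{45657} \cong K$, the conclusion is trivial.

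The main obstacle is essentially expository rather than mathematical: accurately locating each of the three ingredients in the literature and confirming that $14_{45657}$ is indeed the unique genus-$\leq 4$ positive exception. Once these inputs are in hand, the argument is purely bookkeeping, with no further geometric or algebraic work required beyond the symmetry trick to handle the exception.
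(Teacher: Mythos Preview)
Your proposal is correct and follows essentially the same strategy as the paper: reduce to \Cref{thm:main_three} via the tabulated fact that $14_{45657}$ is the unique positive knot of genus at most four violating \eqref{eq:condition}. The only difference is cosmetic---for the exceptional knot the paper uses that signature is a concordance invariant to conclude $14_{45657}$ is concordant to no other positive knot, whereas you instead swap the roles of $K$ and $K'$; both treatments are immediate.
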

	
	Condition (\ref{eq:condition}) also includes all special alternating knots, since these satisfy $|\sigma(K)| = d(K)$. We prove \Cref{thm:main_three} by showing that positive knots which satisfy (\ref{eq:condition}) are {\em $\Q$-anisotropic}---for a definition of $\Q$-anisotropy, see \Cref{sec:concordance} below. By a classical result of Kervaire and Gilmer, algebraically concordant knots which are $\Q$-anisotropic and admit non-singular Seifert matrices have isomorphic rational Alexander modules \cite[Proposition 4.2]{gil84} \cite{ker71}. Thus, we could remove the requirement (\ref{eq:condition}) from \Cref{thm:main_three} if we knew that:
	
	\begin{conj}
		\label{conj:two}
		Positive knots are $\Q$-anisotropic.
	\end{conj}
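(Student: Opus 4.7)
The plan is to establish $\Q$-anisotropy by working factor-by-factor on the Blanchfield form, exploiting the combinatorial structure of a positive diagram. First, I would take a positive diagram $D$ of $K$ and let $F$ be the Seifert surface produced by Seifert's algorithm; by a theorem of Cromwell, $F$ has minimal genus. In the natural basis of $H_1(F; \Z)$ coming from the bounded complementary regions of the Seifert-circle graph, the resulting Seifert matrix $V$ has a distinctive sign pattern, with positive diagonal entries and off-diagonal entries whose signs are governed by the incidence structure of the graph.

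Next, by Levine's analysis of the rational algebraic concordance group, anisotropy of the Blanchfield form on $A_{\Q}(K)$ reduces to anisotropy of each $p$-primary summand, where $p$ ranges over the symmetric irreducible factors of $\Delta_K$ in $\Q[t, t^{-1}]$; factors occurring in non-symmetric pairs $\{p(t), p(t^{-1})\}$ contribute hyperbolic summands automatically. For each symmetric $p(t)$ with roots on $S^1$, anisotropy of the corresponding summand is detected by a Milnor-type signature $\sigma_p(K)$, essentially the jump of the Tristram-Levine signature function across the roots of $p$. The conjecture thereby reduces to showing $\sigma_p(K) \neq 0$ for every symmetric irreducible factor $p$ of $\Delta_K$.

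The proof of \Cref{thm:main_three} yields the cumulative bound $\sum_p |\sigma_p(K)| \geq d(K) - 2$ under the hypothesis (\ref{eq:condition}). To remove this hypothesis, I would seek a localized analogue: show that the positivity of $V$ above is inherited by the Hermitian form $(1 - \omega)V + (1 - \bar\omega)V^T$ at every root $\omega \in S^1$ of $\Delta_K$, forcing $\sigma_p(K) \neq 0$ for each symmetric irreducible $p$. The input would be the linear signature lower bounds of Baader-Dehornoy-Liechti combined with a block decomposition of $V$ reflecting the geometry of the Seifert-circle graph.

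The main obstacle is ruling out cancellations between primary summands that are invisible to the total signature. It is conceivable a priori that two irreducible factors $p, q$ of $\Delta_K$ contribute Blanchfield summands whose direct sum is hyperbolic over $\Q$, even when each factor individually has large signature jump over $\R$. Controlling this likely requires either a new structural result showing the Seifert form of a positive knot block-decomposes compatibly with the factorization of $\Delta_K$ (perhaps arising geometrically from a decomposition of $F$), or a direct argument on the infinite cyclic cover that sidesteps prime decomposition altogether. I expect this to be the crux of the conjecture, and it is likely the reason that the hypothesis (\ref{eq:condition}) appears in \Cref{thm:main_three}.
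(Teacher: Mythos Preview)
The statement you are addressing is \Cref{conj:two}, which the paper explicitly records as an open \emph{conjecture}; there is no proof of it in the paper to compare against. What the paper does prove is the partial result \Cref{prop:q_anis}: a positive knot satisfying $|\sigma(K)| \geq d(K) - 2$ is $\Q$-anisotropic. That argument uses only \Cref{cor:roots} (no rational roots, so no one-dimensional invariant isotropic subspace) together with the Milnor form to rule out invariant isotropic subspaces of dimension at least two via the signature bound. The paper makes no attempt to go beyond this hypothesis, and you yourself correctly identify that removing it is ``the crux of the conjecture.'' So your write-up is a research outline for an open problem, not a proof, and should be labelled as such.

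Beyond that framing issue, there is a genuine gap in your reduction. You assert that non-symmetric factor pairs $\{p(t), p(t^{-1})\}$ of $\Delta_K$ ``contribute hyperbolic summands automatically'' and then set them aside. But a hyperbolic summand \emph{has} a $t$-invariant Lagrangian, and under the standard dictionary this produces exactly a nontrivial invariant $\Q$-isotropic subspace of $H^1(\bar X;\Q)$; so such factors would \emph{disprove} $\Q$-anisotropy rather than being harmless. Your outline therefore tacitly assumes that every irreducible $\Q$-factor of $\Delta_K$ is symmetric for a positive knot, which is not established. Relatedly, the condition $\sigma_p(K) \neq 0$ for a symmetric factor $p$ only shows the $p$-primary summand is non-metabolic (nonzero in the Witt group); it does not by itself show the summand is \emph{anisotropic} in the paper's sense (no nontrivial invariant isotropic subspace at all). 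You would need to upgrade the signature-jump information to a definiteness statement on each primary piece, which is strictly stronger. These two points---ruling out non-symmetric factors, and passing from non-metabolic to anisotropic on each symmetric primary piece---are the concrete obstacles your plan would have to overcome, and neither is addressed.
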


	By work of Gilmer, \Cref{conj:two} may be thought of as the statement that positive knots are {\em algebraically} ribbon concordance minimal \cite[Theorem 0.1]{gil84}. Scharlemann proved \Cref{conj:two} for the case of torus knots \cite[Proposition 2.3]{sch77}.
	
	\subsection{Further discussion}
	
	\Cref{sec:concordance} below contains some results on roots of Alexander polynomials which may be of independent interest---for example, we show that Alexander polynomials of positive knots have no rational roots. We also remark that our proof of \Cref{thm:main_three} extends to {\em almost positive} knots, knots which admit a diagram with one negative crossing, using results of Tagami and Stoimenow \cite{tag14}\cite[Theorem 2.3]{sto15}. It may be interesting to consider whether Theorems \ref{thm:main_one} and \ref{thm:main_two} could also be extended to almost positive knots.
	
	\subsection{Outline} In \Cref{sec:background} we recall relevant properties of postiive knots, in \Cref{sec:band_prime} we prove \Cref{thm:main_one}, and in \Cref{sec:minimal} we prove \Cref{thm:main_two}. In \Cref{sec:concordance} we discuss $\Q$-anisotropy and prove \Cref{thm:main_three} and \Cref{cor:genus}.
	
	\subsection{Acknowledgements}
	
	The author thanks Josh Greene for guidance with math and writing, and Ian Biringer for a helpful conversation about handlebodies. This material is based upon work supported by the National Science Foundation under Award No.~2202704.
	
	\section{Properties of Positive Knots}
	\label{sec:background}
	
	We gather some useful facts about positive knots. First, by results of Rudolph, the genus $g(K)$ and slice genus $g_4(K)$ of a positive knot $K$ are equal \cite{rudolph93, rudolph99}. This motivates the following lemma, well known to experts:
	
	\begin{lemma}
		\label{lem:genus}
		Let $K_0, K_1 \subset S^3$ be such that $K_0 \leq K_1$ and $K_1$ satisfies $g(K_1) = g_4(K_1)$. Then $g(K_0) = g(K_1)$.
	\end{lemma}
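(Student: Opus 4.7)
The plan is to sandwich $g(K_0)$ between $g_4(K_1)$ and $g(K_1)$, which coincide by hypothesis. I would proceed in three short steps.

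First, I would observe that a ribbon concordance is in particular a smooth concordance, and that smoothly concordant knots have equal smooth slice genus (cap off one end of the concordance with a minimal-genus slice surface for $K_1$ to obtain one for $K_0$, and vice versa). Hence $g_4(K_0) = g_4(K_1)$.

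Second, I would invoke Zemke's theorem, already cited in the introduction, which gives $g(K_0) \leq g(K_1)$ whenever $K_0 \leq K_1$. Combined with the universal inequality $g_4(K_0) \leq g(K_0)$ and the previous step, this yields the chain
\[
g_4(K_1) \;=\; g_4(K_0) \;\leq\; g(K_0) \;\leq\; g(K_1).
\]

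Finally, using the hypothesis $g(K_1) = g_4(K_1)$, the two outer terms agree, forcing equality throughout, and in particular $g(K_0) = g(K_1)$. There is no substantial obstacle here: the lemma is really just the observation that once $g(K_1)$ has already been pushed down to its slice-genus lower bound, Zemke's monotonicity has no room to decrease the genus further along a ribbon concordance.
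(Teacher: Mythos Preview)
Your proof is correct and essentially identical to the paper's: the paper writes the same chain $g_4(K_1) = g_4(K_0) \leq g(K_0) \leq g(K_1) = g_4(K_1)$, invoking that slice genus is a concordance invariant and that genus is non-increasing under ribbon concordance by Zemke.
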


	In particular, the conclusion of \Cref{lem:genus} holds if $K_0 \leq K_1$ and $K_1$ is positive.

	\begin{proof}
		Since genus is non-increasing under ribbon concordance \cite{zem19} and slice genus is a concordance invariant, we have
		$$
		g_4(K_1) = g_4(K_0) \leq g(K_0) \leq g(K_1) = g_4(K_1).
		$$
	\end{proof}

	Given Rudolph's work, it is natural to ask whether minimal genus Seifert surfaces of positive knots are special in some way. A result in this direction was proven by Ozawa. A Seifert surface $S \subset S^3$ is called {\em free} if $S^3 - \nu(S)$ is a handlebody, where $\nu$ denotes a regular open neighborhood; equivalently, $S$ is free if $\pi_1(S^3 - S)$ is a free group.
	
	\begin{thm}[{\cite[Corollary 1.2]{oza02}}]
		\label{thm:ozawa}
		If $K$ is a positive knot, then every incompressible Seifert surface of $K$ is free.
	\end{thm}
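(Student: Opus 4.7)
The plan is to first construct an explicit free Seifert surface for $K$ using a positive diagram, and then argue that every other incompressible Seifert surface must inherit freeness via positivity of $K$.

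Let $D$ be a positive diagram of $K$, and let $S_0$ be the Seifert surface produced by Seifert's algorithm applied to $D$. The surface $S_0$ is a disc-band surface: disjoint disks bounded by the Seifert circles of $D$, joined by positively twisted bands at each crossing. I would verify directly that $S_0$ is free by isotoping the Seifert circles into a nested configuration and placing the corresponding disks at distinct heights in $S^3$; the union of these disks cuts $S^3$ into 3-balls, and the bands serve to drill tunnels between them, exhibiting $S^3 \setminus \nu(S_0)$ as a handlebody. Since Rudolph's theorem gives $g(K) = g_4(K)$, the surface $S_0$ has minimal genus and, in particular, is incompressible.

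Now let $S$ be an arbitrary incompressible Seifert surface of $K$. The goal is to relate $S$ to $S_0$ through moves that preserve the ``free'' property. One natural strategy is to look for a Murasugi-sum decomposition of $S$ compatible with the positive structure on $K$: if $S$ can be exhibited as a plumbing of positive Hopf bands and trivial disks---the very building blocks of $S_0$---then freeness propagates, since Murasugi summation of free surfaces is free under appropriate compatibility. An alternative approach is via a sutured manifold hierarchy for $S^3\setminus\nu(S)$: one would use positivity of $K$ to rule out ``twisted'' decomposition disks and force each stage of the hierarchy to be a product-like handlebody, so that the final complement is itself a handlebody.

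The main obstacle is exactly this bootstrapping step. Incompressibility of $S$ alone does not pin $S$ down up to freeness-preserving equivalence, since positive knots may in principle admit incompressible Seifert surfaces of non-minimal genus or of different isotopy type than $S_0$. The positivity hypothesis on $K$ must be harnessed globally to constrain the structure of $S$---either via a uniqueness statement (every incompressible Seifert surface of a positive knot is, up to the relevant moves, obtained by Seifert's algorithm from a positive diagram) or via a direct argument that the sutured manifold cut along any incompressible $S$ is a handlebody. Establishing such a constraint is the heart of the proof, and is where I expect the bulk of the work to lie.
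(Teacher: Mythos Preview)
This theorem is not proved in the paper at all: it is quoted verbatim from Ozawa's work and used as a black box. So there is no ``paper's own proof'' to compare against.

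As for your proposal, it is not a proof but an outline with the central step missing, and you say so yourself. You correctly handle the algorithmic surface $S_0$: the disk--band description does show $S^3 \setminus \nu(S_0)$ is a handlebody. But everything after that is speculation. Neither of your two suggested bridges from $S_0$ to an arbitrary incompressible $S$ works as stated. The claim that ``Murasugi summation of free surfaces is free under appropriate compatibility'' is not a theorem you can invoke without specifying and proving the compatibility condition; in general, plumbing free surfaces need not yield a free surface. And the sutured-hierarchy sketch has no mechanism connecting positivity of a \emph{diagram} of $K$ to the structure of decomposing surfaces for an arbitrary $S$ that may bear no visible relation to that diagram. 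You also float a uniqueness statement (every incompressible Seifert surface arises from Seifert's algorithm on a positive diagram), but positive knots can have non-isotopic minimal-genus Seifert surfaces, so uniqueness in the na\"{\i}ve sense fails.

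Ozawa's actual argument proceeds differently: rather than comparing $S$ to $S_0$, he works directly with an arbitrary essential Seifert surface and the positive diagram, putting $S$ in a controlled position relative to the level spheres and using positivity of the crossings to analyze the pieces of the complement. If you want to reconstruct the proof, that is the direction to pursue; trying to transport freeness from $S_0$ to $S$ by abstract moves is unlikely to succeed.
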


	Finally, we will use the fact that positive knots are {\em pseudo-alternating} (not be confused with {\em quasi-alternating}!). The precise definition of pseudo-alternating will not be important to us (see \cite[Section 4]{mm76}), but for experts we note that pseudo-alternating links are those links which can be built from Murasugi sums of special alternating links. Positive knots are pseudo-alternating because they are homogeneous \cite{cro89}.
	
	\section{Positive Knots are Band Prime}
	\label{sec:band_prime}
	
	In this section we prove \Cref{thm:main_one}, first recalling some standard definitions from three-manifold topology. Let $Y$ be a three-manifold and $\Sigma \subset Y$ a properly embedded surface. A {\em compressing disk for} $\Sigma$ is an embedded disk $D \subset Y$ with $D \cap \Sigma = \partial D$, such that $\partial D$ does not bound a disk in $\Sigma$. Similarly, a {\em boundary-compressing disk for} $\Sigma$ is a disk $D \subset Y$ such that:
	\begin{itemize}
		\item $D \cap \Sigma \subset \partial D$
		\item $\partial D$ consists of an arc in $\partial Y$ and an arc in $\Sigma$ which is not boundary-parallel in $\Sigma$.
	\end{itemize}
	The arc $\partial D \cap \Sigma$ is called a {\em boundary-compressing arc}. The surface $\Sigma$ is called {\em compressible} (resp.~{\em boundary-compressible}) if it admits a compressing disk (resp.~boundary-compressing disk), and {\em incompressible} (resp.~{\em boundary-incompressible}) otherwise.
	
	We will need a couple lemmas on surfaces in handlebodies---the first is a classical fact.
	
	\begin{lemma}[{\cite[Example III.13]{jac80}}]
		\label{lem:bpone}
		Let $H$ be a handlebody. If $\Sigma \subset H$ is a connected surface which is incompressible and boundary-incompressible, then $\Sigma$ is a disk.
	\end{lemma}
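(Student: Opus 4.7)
The standard approach is to reduce the problem to the case of a surface in a $3$-ball via a meridian-disk argument. Let $g$ be the genus of $H$ and fix a complete system of pairwise disjoint, properly embedded meridian disks $\mathcal{D} = D_1 \cup \cdots \cup D_g \subset H$, so that $B = H \setminus \nu(\mathcal{D})$ is a $3$-ball. After a small ambient isotopy, I would arrange $\Sigma$ to meet $\mathcal{D}$ transversely, making $\Sigma \cap \mathcal{D}$ a $1$-manifold consisting of circles and properly embedded arcs in $\mathcal{D}$. I would then isotope $\Sigma$ to minimize the number of components of $\Sigma \cap \mathcal{D}$.

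The first step is to show this minimum is zero. Suppose $\Sigma \cap \mathcal{D}$ contains a circle, and choose one innermost on some $D_i$, bounding a subdisk $D' \subset D_i$ with $D' \cap \Sigma = \partial D'$. Because $\Sigma$ is incompressible, $\partial D'$ bounds a disk $D'' \subset \Sigma$; since $H$ is irreducible, $D' \cup D''$ bounds a $3$-ball, and an isotopy of $\Sigma$ across it strictly decreases $|\Sigma \cap \mathcal{D}|$, contradicting minimality. Hence $\Sigma \cap \mathcal{D}$ consists only of arcs. Picking an outermost such arc $\alpha$ on some $D_i$, the subdisk it cuts off is a boundary-compressing disk for $\Sigma$; boundary-incompressibility forces $\alpha$ to be boundary-parallel in $\Sigma$, and an analogous ``bigon'' isotopy removes $\alpha$ from $\Sigma \cap \mathcal{D}$, again contradicting minimality. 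So $\Sigma \cap \mathcal{D} = \emptyset$, and $\Sigma$ lies entirely in $B$.

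It remains to verify the classical fact that a connected, properly embedded, incompressible, boundary-incompressible surface in a $3$-ball is a disk. If $\Sigma$ had positive genus, any non-separating simple closed curve on $\Sigma$ would be null-homotopic in the simply-connected ball $B$, and Dehn's lemma would produce an embedded compressing disk in $B$, a contradiction. So $\Sigma$ is planar. If $\partial \Sigma$ had two or more components, an arc in $\Sigma$ joining two of them would not be boundary-parallel in $\Sigma$, and together with an arc in $\partial B$ it would bound a disk in the simply-connected $B$, yielding a boundary-compressing disk and contradicting boundary-incompressibility. Therefore $\partial \Sigma$ is a single circle and $\Sigma$ is a disk, as claimed.

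The main technical point will be to ensure that each reducing isotopy strictly decreases $|\Sigma \cap \mathcal{D}|$ rather than introducing new intersections with other $D_j$. This is handled by always choosing innermost circles and outermost arcs on each $D_i$ individually, and by perturbing the ball bounded by $D' \cup D''$ to have interior disjoint from $\mathcal{D}$ before performing the isotopy; with this bookkeeping the argument closes cleanly.
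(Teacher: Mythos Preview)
The paper does not actually prove this lemma; it is simply quoted with a citation to Jaco's book, so there is no in-paper argument to compare against. Your proposal is the standard meridian-disk reduction that one finds in Jaco, and the overall architecture---make $\Sigma$ transverse to a complete meridian system, use incompressibility and irreducibility to eliminate innermost circles, use $\partial$-incompressibility to eliminate outermost arcs, then finish in the $3$-ball---is correct and would be accepted as a proof.

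One spot deserves a little more care. In the final step you assert that if $\partial\Sigma$ has at least two components, then an arc $\alpha\subset\Sigma$ joining two of them ``together with an arc in $\partial B$ \dots\ bound[s] a disk in the simply-connected $B$, yielding a boundary-compressing disk.'' Simple connectivity gives you \emph{a} disk $D$ with $\partial D=\alpha\cup\alpha'$, but not automatically one with $D\cap\Sigma=\alpha$, which is what ``$\partial$-compressing disk'' requires. You should say one more sentence: take $D$ embedded (possible since every embedded circle in $B^3$ bounds an embedded disk), put it transverse to $\Sigma$, remove circle components of $D\cap\Sigma$ using incompressibility of $\Sigma$ and irreducibility of $B$, and then look at an outermost arc of $(D\cap\Sigma)\setminus\alpha$ on $D$---either it gives the desired $\partial$-compression, or it is $\partial$-parallel in $\Sigma$ and can be isotoped away, reducing $|D\cap\Sigma|$. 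This is routine, but without it the sentence reads as an appeal to exactly the property you are trying to establish. With that addition the argument is complete.
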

	
	In the next lemma, by a {\em planar surface} we mean a compact surface which can be embedded in $\R^2$.
	
	\begin{lemma}
		\label{lem:bpthree}
		Let $H$ be a handlebody with boundary $F = \partial H$. Let $\Sigma \subset H$ be a properly embedded, connected planar surface such that:
		\begin{itemize}
			\item $\Sigma$ is incompressible in $H$, and 
			\item The components of $\partial \Sigma$ are separating and parallel to one another in $F$.
		\end{itemize}
		Then $\Sigma$ is either a disk or a boundary-parallel annulus.
	\end{lemma}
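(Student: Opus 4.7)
The proof treats cases based on the number $n$ of boundary components of $\Sigma$. The case $n = 1$ is immediate, since any planar connected surface with one boundary component is a disk.

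The main case is $n = 2$. Let $A \subset F$ denote the annulus between $c_1$ and $c_2$, so that $T := \Sigma \cup A$ is an embedded torus in $H$. Since each $c_i$ is separating in $F$, the class $[\partial\Sigma]$ vanishes in $H_1(F;\Z)$; combined with $H_2(H;\Z) = 0$ (as $H$ is a handlebody), the long exact sequence of the pair $(H,\partial H)$ forces $[\Sigma] = 0$ in $H_2(H,\partial H;\Z)$, so $\Sigma$ separates $H$ into two components, which I label so that $A \subset \partial H_2$, giving $\partial H_2 = T$. By Van Kampen, $\pi_1(H_2)$ embeds into the free group $\pi_1(H)$, and is itself free by Nielsen--Schreier; hence $\Z^2 \cong \pi_1(T)$ cannot inject into $\pi_1(H_2)$, and by the loop theorem $T$ is compressible in $H_2$. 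Combined with irreducibility of $H$, a standard compression argument then forces $H_2$ to be a solid torus. Since $\Sigma$ is incompressible in $H$, it is an essential annulus in $H_2$, and by the classification of essential annuli in a solid torus, $\Sigma$ is isotopic rel boundary to $A$ through $H_2$, exhibiting $\Sigma$ as boundary-parallel.

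For $n \geq 3$, I would argue by induction on $n$ that this case cannot occur. By Lemma \ref{lem:bpone}, $\Sigma$ admits a boundary-compressing disk $D = \alpha \cup \beta$. After standard innermost-disk and outermost-arc cleaning, one arranges $\alpha$ to lie in an annular piece $A_i \subset F \setminus \partial\Sigma$ between two adjacent boundary curves $c_i, c_{i+1}$. The band surgery of $c_i \cup c_{i+1}$ along $\alpha$ in this annulus produces a simple closed curve $c''$ null-homotopic in $A_i$: indeed the homology class $[c''] \in H_1(A_i) \cong \Z$ must vanish, since the alternative class $\pm 2$ is not represented by any simple closed curve in an annulus. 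Capping off $c''$ with the disk it bounds in $A_i$ yields a planar, connected, incompressible surface $\Sigma''$ with $n - 2$ parallel separating boundary components. The inductive hypothesis then forces $\Sigma''$ to be a disk or boundary-parallel annulus, and unwinding this structure produces a compressing disk for the original $\Sigma$, contradicting incompressibility.

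The main obstacle is the $n \geq 3$ reduction, specifically guaranteeing that a boundary-compressing disk can be chosen with $\alpha$ in an annular piece of $F \setminus \partial\Sigma$ rather than in one of the end pieces adjacent to $c_1$ or $c_n$, and then verifying that $\Sigma''$ inherits incompressibility from $\Sigma$ through the capping construction. Both steps require a careful complexity-minimizing choice of $D$ that exploits the parallel-separating structure of $\partial\Sigma$.
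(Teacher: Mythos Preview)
Your $n\le 2$ cases are fine, and the $n=2$ argument via the torus $T=\Sigma\cup A$ and the solid torus $H_2$ is a clean alternative to the paper's direct construction (one small point: you should check that $\partial H_2$ really equals $T$, i.e.\ that the two end pieces $F_\pm$ of $F\setminus\partial\Sigma$ lie on the other side of $\Sigma$; this follows from a local analysis near each $c_i$). But the obstacle you flag in the $n\ge 3$ step is fatal rather than merely technical. In fact, if any boundary-compressing disk for $\Sigma$ has its $F$-arc $\alpha$ running between two distinct (hence adjacent) components $c_i,c_{i+1}$, then $\Sigma$ is already an annulus: a regular neighborhood $N$ of $A_i\cup D$ in $H\setminus\Sigma$ is a thickened annulus whose frontier is a single disk $C$ with $\partial C\subset\Sigma$, and incompressibility forces $\partial C$ to bound a disk in $\Sigma$, so that $\Sigma$ is the union of that disk with a neighborhood of $c_i\cup\beta\cup c_{i+1}$. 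Consequently, when $n\ge 3$ \emph{every} boundary-compressing disk has both endpoints of its $F$-arc on a single $c_i$, and your band-surgery reduction $c_i\cup c_{i+1}\leadsto c''$ is simply inapplicable; no amount of innermost/outermost cleaning will produce a disk of the type you want.

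The paper handles this remaining case as follows. Since for a non-annular $\Sigma$ every boundary-compressing disk has its $\Sigma$-arc $\beta$ with both endpoints on the same component of $\partial\Sigma$, planarity forces each such $\beta$ to separate $\Sigma$. One chooses $\beta$ \emph{outermost}, meaning one component $\Sigma'$ of $\Sigma\setminus\beta$ contains no boundary-compressing arc not isotopic to $\beta$, and sets $\Sigma''=\Sigma'\cup_\beta D$. This $\Sigma''$ is not a disk (as $\beta$ is not boundary-parallel in $\Sigma$), is incompressible (a compressing disk for $\Sigma''$ can be isotoped so its boundary lies in $\Sigma'\subset\Sigma$, whereupon incompressibility of $\Sigma$ would force $\Sigma\setminus\Sigma'$ to be a disk, contradicting that $\beta$ is not boundary-parallel), and is boundary-incompressible (by the outermost choice). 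This contradicts Lemma~\ref{lem:bpone}. The incompressibility verification here is the analogue of the step you were worried about, and it works precisely because one caps an outermost separating arc rather than attempting to merge two boundary components.
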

	
	\begin{proof}
		We will suppose $\Sigma$ has more than one boundary component (i.e.~that $\Sigma$ is not a disk) and show $\Sigma$ is a boundary-parallel annulus. By \Cref{lem:bpone}, since $\Sigma$ is incompressible it is boundary-compressible. Let $D$ be a boundary-compressing disk for $\Sigma$, so that $\partial D = \alpha \cup \alpha'$ with $\alpha' \subset F$ and $\alpha$ a properly embedded arc in $\Sigma$ which is not boundary-parallel.
		
		We first suppose the two boundary points $\partial \alpha = \partial \alpha'$ lie in distinct boundary components $\gamma$ and $\gamma'$ of $\partial \Sigma$. Since $\gamma$ and $\gamma'$ are parallel in $F$ they cobound an annulus $A \subset F$, and since they are separating $\alpha'$ lies in $A$. Let $N$ be a regular neighborhood of $A \cup D$ in $H - \Sigma$, and let
		$$
		B = N \cap \Sigma = \partial N \cap \Sigma.
		$$
		Equivalently, $B$ is a regular neighborhood of $\gamma \cup \alpha \cup \gamma'$ in $\Sigma$. Let $C = \partial N - (A \cup B)$, as in Figure \ref{fig:disk_ann}.
		
		\begin{figure}
			\labellist
			\small\hair 2pt
			\pinlabel $A$ at 75 370
			\pinlabel $B$ at 18 315
			\pinlabel $C$ at 133 12
			\pinlabel $D$ at 400 405
			\pinlabel $\gamma$ at 40 55
			\pinlabel $\gamma'$ at 502 193
			\pinlabel $\alpha$ at 435 325
			\endlabellist
			
			\includegraphics[height=5cm]{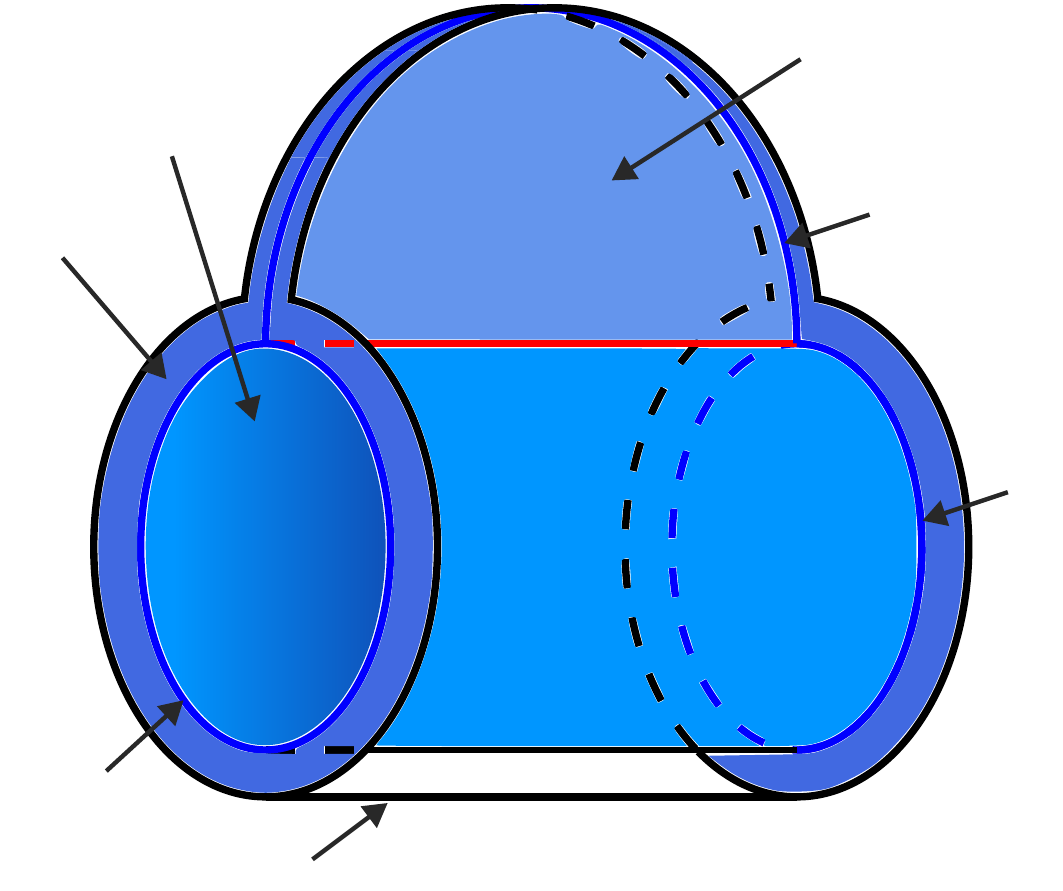}
			\caption{}
			\label{fig:disk_ann}
		\end{figure}
		
		The neighborhood $N$ is a thickened annulus, so $\partial N$ is a torus. Since $A \cup B$ is a torus with a disk removed, as the figure shows, it follows that $C$ is a disk. Now
		$$
		\partial C = \partial B \subset \Sigma,
		$$
		and the incompressibility of $\Sigma$ implies $\partial C$ bounds a disk $C'$ in $\Sigma$. Thus $\Sigma = B \cup C'$, and in this case $\Sigma$ is an annulus with boundary $\gamma \cup \gamma'$. The disks $C$ and $C'$ are parallel since they cobound a three-ball, and it follows that $\Sigma$ is boundary-parallel.
		
		We have shown that if $\Sigma$ is not an annulus, then for any boundary-compressing disk $D$ with $\partial D = \alpha \cup \alpha'$ as above, the endpoints of $\alpha \subset \Sigma$ lie in the same component of $\partial \Sigma$. The planarity of $\Sigma$ then implies each such $\alpha$ separates $\Sigma$. We therefore choose a boundary-compressing disk $D$ whose compressing arc $\alpha \subset \Sigma$ is ``outermost,'' i.e.~one of the two components of $\Sigma - \alpha$ does not contain any boundary-compressing arcs which are not isotopic to $\alpha$. Let $\Sigma'$ denote this component of $\Sigma - \alpha$.
		
		Finally, we consider the surface $\Sigma'' = \Sigma' \cup_{\alpha} D$. Since $D$ is a boundary-compressing disk $\alpha$ is not boundary-parallel in $\Sigma$, so neither $\Sigma'$ nor $\Sigma''$ is a disk. Additionally, $\Sigma''$ is incompressible: suppose, toward a contradiction, that $\Sigma''$ admits a compressing disk $D'$. Then $\partial D'$ may be isotoped into $\Sigma'$, and the incompressibility of $\Sigma$ implies $\partial D'$ bounds a disk in $\Sigma$. Thus $\Sigma$ is the union of $\Sigma'$ with a disk, which contradicts the assumption that $\alpha$ is not boundary-parallel in $\Sigma$. A similar argument, using the fact that $\alpha$ is outermost in $\Sigma$, shows $\Sigma''$ is boundary-incompressible. This contradicts \Cref{lem:bpone}, and we conclude that $\Sigma$ is an annulus as before.
	\end{proof}
	
	\Cref{thm:main_one} now follows from the more general \Cref{thm:band_prime} below. Positive knots satisfy the hypotheses of \Cref{thm:band_prime} by the discussion in \Cref{sec:background}.
	
	\begin{thm}
		\label{thm:band_prime}
		Let $K \subset S^3$ be a knot such that:
		\begin{itemize}
			\item $g(K) = g_4(K)$, and
			\item Every minimal genus Seifert surface of $K$ is free.
		\end{itemize}
		Then $K$ is band prime.
	\end{thm}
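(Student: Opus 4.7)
The plan is to argue by contradiction: suppose $K = K_0 \#_b K_1$ is a non-trivial band sum. Miyazaki's theorem gives $K_0 \# K_1 \leq K$, so combining \Cref{lem:genus}, additivity of genus under connect sum, and the hypothesis $g(K) = g_4(K)$ forces the band sum to be genus-additive: $g(K) = g(K_0) + g(K_1)$. I would then choose minimal-genus Seifert surfaces $F_0, F_1$ for $K_0, K_1$ that are disjoint (using that $K_0 \sqcup K_1$ is split) and with the band $b$ disjoint from their interiors, attached only along $I \times \{0,1\}$. Then $F := F_0 \cup b \cup F_1$ is a Seifert surface for $K$ of genus $g(F_0) + g(F_1) = g(K)$, so it is minimal, hence free by hypothesis. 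Set $H := S^3 \setminus \nu(F)$; this is a handlebody, and $\partial H$ is naturally the double of $F$ along $K$.

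Next, I would take a splitting sphere $S_0 \subset S^3 \setminus (K_0 \cup K_1)$ in general position with $F$, chosen to minimize $|S_0 \cap F|$. Using incompressibility of $F_0, F_1$ in $S^3 \setminus (K_0 \cup K_1)$ together with the fact that $b$ is a disk, standard innermost-disk and outermost-bigon moves (whose supporting disks in $F_i$ and in $b$ can be chosen disjoint from $K_0 \cup K_1$) arrange that $S_0 \cap F$ consists of $n$ disjoint arcs in $b$ each joining $\{0\} \times I$ to $\{1\} \times I$. These arcs cut $b$ into $n+1$ strips lying alternately on the two sides of $S_0$; since $K_0, K_1$ lie on opposite sides, $n$ must be odd, and non-triviality of the band sum forces $n \neq 1$, so $n \geq 3$.

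The surface $\Sigma := S_0 \cap H$ is then a connected planar surface properly embedded in $H$ with $n$ boundary circles on $\partial H$. Under the identification $\partial H \cong \mathrm{double}(F)$, each boundary circle is the double of one arc $\alpha_i$; since $\alpha_i$ separates $F$ (into an $F_0$-half and an $F_1$-half), its double separates $\partial H$. Moreover, disjoint separating arcs $\alpha_i, \alpha_j$ cobound a sub-rectangle of $b$, and its double is an annulus in $\partial H$ cobounded by the corresponding two boundary circles of $\Sigma$, so all $n$ boundary components of $\Sigma$ are pairwise parallel. Thus $\Sigma$ satisfies the topological hypotheses of \Cref{lem:bpthree}.

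The main obstacle, and the key step, is showing that $\Sigma$ is incompressible in $H$, which I would deduce from minimality of $|S_0 \cap F|$ via a disk-swap. Suppose $D \subset H$ were a compressing disk for $\Sigma$. Its boundary $\partial D \subset S_0$ bounds two disks $D_0^a, D_0^b$ in the sphere $S_0$, and essentiality of $\partial D$ in $\Sigma$ forces each of $D_0^a, D_0^b$ to contain at least one arc of $S_0 \cap F$. The disk $D$ lies in one of the two balls bounded by $S_0$ and divides it into two sub-balls $B^a, B^b$ with $\partial B^a = D \cup D_0^a$ and $\partial B^b = D \cup D_0^b$; exactly one of $B^a, B^b$ contains $K_0$. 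Replacing the $D_0$ bounding the sub-ball that misses $K_0$ with $D$ in $S_0$ yields a new sphere $S_0' \subset S^3 \setminus (K_0 \cup K_1)$ still separating $K_0$ from $K_1$, with $|S_0' \cap F| < |S_0 \cap F|$, contradicting minimality. With $\Sigma$ incompressible in hand, \Cref{lem:bpthree} forces $\Sigma$ to be either a disk ($n = 1$, contradicting $n \geq 3$) or a boundary-parallel annulus ($n = 2$, contradicting the parity of $n$). Either case gives a contradiction, so $K$ must be band prime.
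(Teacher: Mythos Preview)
Your overall strategy matches the paper's: use Miyazaki plus \Cref{lem:genus} to get genus-additivity, produce a minimal genus Seifert surface of the form $F_0 \cup b \cup F_1$, take a separating sphere minimizing intersection with this surface, show the resulting planar surface in the handlebody $H$ is incompressible by a disk-swap, and then invoke \Cref{lem:bpthree}. The incompressibility argument and the endgame are essentially identical to the paper's.

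There is, however, a genuine gap at the step where you ``choose minimal-genus Seifert surfaces $F_0, F_1$ \ldots\ with the band $b$ disjoint from their interiors.'' You give no justification for this, and it is not a routine general-position claim. Indeed, if such $F_0,F_1$ always existed for the \emph{given} band $b$, then $F_0\cup b\cup F_1$ would always be a Seifert surface of genus $g(K_0)+g(K_1)$, forcing every band sum to be genus-additive; but a band sum of two unknots can be a trefoil, so this is false. What is true is Gabai's theorem: once genus-additivity is established, there exists a minimal genus Seifert surface for $K$ of the form $S_0 \#_{b'} S_1$ --- but with a possibly \emph{different} band $b'$. The paper invokes Gabai explicitly here, and then needs a further ingredient you omit: Miyazaki's observation that $K_0\#_b K_1\cong K_0\# K_1$ if and only if the band sum is trivial. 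This is what allows one to replace $b$ by Gabai's $b'$, prove the $b'$-band sum is trivial (your argument then goes through verbatim and gives $n=1$), conclude $K\cong K_0\# K_1$, and finally deduce that the \emph{original} band sum was trivial. Without these two citations your contradiction does not close, since ``$n=1$ for $b'$'' does not directly contradict non-triviality of the $b$-band sum.
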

	
	\begin{proof}
		Let $K$ be a knot satisfying the given hypotheses, and suppose $K$ can be written as a band sum of knots $K_0, K_1 \subset S^3$. We will show $K$ is equivalent to the standard connect sum $K_0 \# K_1$; by an observation of Miyazaki, this occurs if and only if the band sum is trivial \cite{miy20} (cf. ~\cite{em92}).
		
		As mentioned in the introduction, Miyazaki also showed $K$ is ribbon concordant to the connect sum $K_0 \# K_1$ \cite{miy98}. Thus, by \Cref{lem:genus}, 
		$$
		g(K) = g(K_0 \# K_1) = g(K_0) + g(K_1).
		$$
		Gabai proved that a band sum preserves genus in the above sense if and only if there exists a minimal genus Seifert surface $S$ for $K$, such that $S$ is a band sum
		$$
		S = S_0 \#_b S_1
		$$
		of Seifert surfaces $S_i$ for $K_i$, $i = 0,1$ \cite{gab87}. In other words, $S$ is the result of joining the split union $S_0 \sqcup S_1$ along a band $b$. The band $b$ may be different from the band in our initial band sum representation of $K$, but this is allowed by the discussion in the first paragraph. We fix such a surface $S$, and let $\Sigma \subset S^3$ be a sphere separating the component surfaces $S_0$ and $S_1$. We choose $\Sigma$ transverse to $S$ so that the number of intersection components $|\Sigma \cap S|$ is minimal among all such spheres; then $\Sigma \cap S$ consists of a set of parallel co-cores of $b$, and $|\Sigma \cap S|$ is odd since $\Sigma$ separates the feet of $b$.
		
		Let $\nu(S)$ be a regular neighborhood of $S$, and let $H = S^3 - \nu(S)$. We claim the planar surface
		$$
		\Sigma \cap H = \Sigma - \nu(S)
		$$ 
		is incompressible in $H$. Suppose not: then $\Sigma \cap H$ admits a compressing disk $D$. The curve $\partial D$ separates $\Sigma$ into components $\Sigma_1$ and $\Sigma_2$, and each component $\Sigma_i$ contains some intersection with $S$ since $\partial D$ is essential in $\Sigma - \nu(S)$. Because $\Sigma$ separates $S_0$ and $S_1$, one of the spheres $\Sigma_0 \cup D$ or $\Sigma_1 \cup D$ does as well. Assuming the former without loss of generality, we conclude that $\Sigma_0 \cup D$ is a sphere separating $S_0$ and $S_1$ with
		$$
		|(\Sigma_0 \cup D) \cap S| < |\Sigma \cap S|.
		$$
		This contradicts the minimality of $|\Sigma \cap S|$, proving the claim.
		
		The surface $S$ has minimal genus, so $H$ is a handlebody by hypothesis. Now $\Sigma \cap H$ is an incompressible planar surface in $H$, and since $\Sigma \cap S$ consists of a set of parallel co-cores of $b$, $\partial (\Sigma \cap H)$ consists of a set of parallel curves which are separating on $\partial H$. From \Cref{lem:bpthree}, since $|\Sigma \cap S|$ is odd, we conclude that $|\Sigma \cap S| = 1$ and $\Sigma - S$ is a disk. Thus the band sum $K_0 \#_b K_1$ is trivial and $K$ is band prime.
	\end{proof}

	\begin{rmk}
		It is not true that {\em strongly quasi-positive} knots are band prime---see \cite[Section 4.1]{bamo17}---but it is true that fibered strongly quasi-positive knots are band prime by a result of Baker and Motegi \cite[Theorem 1.1]{bamo17}. In fact Baker and Motegi's argument shows that fibered strongly quasi-positive knots, like fibered positive knots, are ribbon concordance minimal.
	\end{rmk}
	
	\section{A Condition for Minimality}
	\label{sec:minimal}
	
	In this section we prove \Cref{thm:main_two}, which involves piecing together several existing results. First, the {\em lower central series} $\{\gamma_i\}_{i \geq 0}$ of a group $G$ is defined recursively by
	\begin{align*}
		\gamma_0 &= G \\
		\gamma_i &= [\gamma_{i - 1},G] \text{ for all } i > 0,
	\end{align*}
	where $[ * , *]$ indicates the commutator. The group $G$ is {\em residually nilpotent} if $\bigcap_{i = 0}^\infty \gamma_i = \{1\}$, and following Gordon \cite{gor81} we say a knot $K \subset S^3$ is {\em residually nilpotent} if the commutator subgroup of the knot group is residually nilpotent. As in the introduction, let $d(K)$ denote the degree of $\Delta_K$. Then Gordon proves:
	
	\begin{lemma}[{\cite[Lemma 3.4]{gor81}}]
		\label{lem:gordon}
		Let $K_0, K_1 \subset S^3$ be knots with $K_0 \leq K_1$. If $K_1$ is residually nilpotent and $d(K_0) = d(K_1)$, then $K_0 \cong K_1$.
	\end{lemma}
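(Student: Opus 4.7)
The plan is to use Gordon's own earlier work in the same paper, which shows that a ribbon concordance from $K_1$ to $K_0$ produces a group retraction $\rho \colon G_1 \twoheadrightarrow G_0$ of knot groups, preserving the meridian and admitting a section $\iota \colon G_0 \hookrightarrow G_1$. Writing $N = \ker \rho$, the goal is to prove $N = 1$; the resulting isomorphism $G_1 \cong G_0$ preserves peripheral structure, and $K_0 \cong K_1$ then follows from Waldhausen's theorem together with the Gordon--Luecke theorem.

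To show $N = 1$, I restrict the whole picture to commutator subgroups. The retraction $\rho$ descends to a split surjection $[G_1, G_1] \twoheadrightarrow [G_0, G_0]$ and, abelianising, to a split surjection of $\Z[t, t^{-1}]$-modules $A_1 \twoheadrightarrow A_0$, so that $A_1 \cong \bar N \oplus A_0$ and the Alexander polynomials satisfy $\Delta_{K_1} = \Delta_{\bar N} \cdot \Delta_{K_0}$ up to units. The hypothesis $d(K_0) = d(K_1)$ forces $\Delta_{\bar N}$ to have degree zero; combined with the classical fact that $A_1$ is $\Z$-torsion-free (because the intersection form on a Seifert surface is unimodular), this yields $\bar N = 0$ and hence an isomorphism $A_1 \cong A_0$.

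The key step is now Stallings' theorem applied to the restricted retraction $\rho|_{[G_1, G_1]} \colon [G_1, G_1] \twoheadrightarrow [G_0, G_0]$. The induced map on $H_1([G_i, G_i]) = A_i$ is an isomorphism by the previous paragraph, and the restriction of $\iota$ provides a section, which immediately guarantees a surjection on $H_2([G_i, G_i])$. Stallings then gives isomorphisms on all nilpotent quotients $[G_1, G_1]/\gamma_k \cong [G_0, G_0]/\gamma_k$. Residual nilpotence of $[G_1, G_1]$ forces $\ker(\rho|_{[G_1, G_1]}) = 1$; since $\rho$ is an isomorphism on $H_1 = \Z$, the kernel $N$ already lies in $[G_1, G_1]$, so $N = 1$.

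The main obstacle I expect is the careful verification that Gordon's prior lemma really produces both the retraction $\rho$ and a section $\iota$ (not merely a surjection), and that the induced splitting on Alexander modules is $\Z[t, t^{-1}]$-equivariant. Both facts should follow from the geometry of the concordance exterior: the section is realised by the inclusion $X_0 \hookrightarrow E_C$ together with a deformation retraction of $E_C$ onto $X_1$, and meridian-preservation of all maps makes their induced effects on Alexander modules equivariant for the deck transformation action.
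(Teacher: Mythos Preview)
The paper does not give its own proof of this lemma; it is simply quoted from Gordon \cite[Lemma~3.4]{gor81}. So there is nothing in the present paper to compare against, and the question becomes whether your sketch reproduces Gordon's argument correctly.

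There is a genuine gap. You assert that Gordon's earlier lemma produces a retraction $\rho\colon G_1\twoheadrightarrow G_0$ together with a section $\iota\colon G_0\hookrightarrow G_1$, and you propose to realise $\iota$ via a deformation retraction of $E_C$ onto $X_1$. Neither claim is correct. What Gordon's Lemma~3.1 actually gives is that the inclusion-induced maps satisfy
\[
\pi_1(X_0)\ \hookrightarrow\ \pi_1(E_C)\ \twoheadleftarrow\ \pi_1(X_1),
\]
i.e.\ an injection from $G_0$ into $\Pi:=\pi_1(E_C)$ and a surjection from $G_1$ onto $\Pi$. No map runs directly between $G_0$ and $G_1$, and in particular there is no retraction of $G_1$ onto $G_0$. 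Your proposed deformation retraction of $E_C$ onto $X_1$ also fails: turned upside down, $E_C$ is built from $X_1$ by attaching $n$ two-handles and $n$ three-handles, and the two-handles genuinely alter the homotopy type (this is exactly why Gordon only obtains a surjection $G_1\twoheadrightarrow\Pi$, not an isomorphism). Once the retraction is gone, so is your split short exact sequence, the decomposition $A_1\cong\bar N\oplus A_0$, and the application of Stallings' theorem to $\rho|_{[G_1,G_1]}$.

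Your overall strategy---Stallings' theorem on commutator subgroups combined with residual nilpotence and the degree hypothesis---is indeed the mechanism behind Gordon's proof, but it has to be run through the intermediary group $\Pi$ rather than through a phantom retraction. Gordon first controls the Alexander-module maps $A_i\to H_1(\widetilde{E_C})$ and then applies Stallings to the surjection $G_1'\twoheadrightarrow\Pi'$ (where surjectivity gives the $H_2$ condition for free); residual nilpotence of $G_1'$ forces this to be an isomorphism, after which one studies the injection $G_0'\hookrightarrow\Pi'\cong G_1'$ using the equality $d(K_0)=d(K_1)$. You have the right ingredients but have assembled them around a map that does not exist.
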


	Fibered knots are examples of residually nilpotent knots, since their commutator subgroups are free (see \cite[Chapter 5]{mks04}), but little is known in general about which knots are residually nilpotent. Murasugi and Mayland proved the following theorem:
	
	\begin{thm}[\cite{mm76}]
		\label{thm:mm}
		Let $K$ be a pseudo-alternating knot such that the leading coefficient of $\Delta_K$ is a prime power. Then $K$ is residually nilpotent.
	\end{thm}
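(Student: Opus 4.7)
The plan is to reduce to the case of special alternating knots via the Murasugi sum decomposition, and then apply an algebraic residual nilpotency criterion to the commutator subgroup. By definition, any pseudo-alternating Seifert surface is built from iterated Murasugi sums of special alternating surfaces, and the Alexander polynomial is multiplicative under Murasugi sum, so the prime-power hypothesis on the leading coefficient of $\Delta_K$ descends to the corresponding hypothesis on each special alternating summand. One then argues that residual nilpotency is preserved under Murasugi sum, since the commutator subgroup of the sum decomposes as an amalgamation of the commutator subgroups of the summands along free subgroups supported on the plumbing region of the summed Seifert surface.

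For the base case, let $K$ be special alternating, let $S$ be its Seifert surface produced by Seifert's algorithm, let $V$ be the associated Seifert form, and let $G$ be the knot group. Because $S$ is a plumbing of coherently signed bands, the infinite cyclic cover $\bar{X}$ of the knot exterior admits a transparent CW-structure built from lifts of $S$, and the commutator subgroup $G'$ is an infinite amalgamation of free groups along free subgroups. This already gives that $G/G''$ is the semidirect product $H_1(\bar{X};\Z) \rtimes \Z$ of the Alexander module with $\Z$. Since the leading coefficient of $\Delta_K$ equals $\pm \det V$, the prime-power assumption forces the Alexander module, after reduction modulo $p$, to be a finite $\F_p$-module on which $t$ acts as a unipotent plus a nilpotent modification, so that $G/G''$ is residually a finite $p$-group by a Hartley-type criterion for metabelian groups. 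Using that free groups are residually nilpotent, one then promotes residual nilpotency from $G/G''$ to $G'$ by detecting any nontrivial element of $\gamma_i(G')$ in a finite $p$-nilpotent quotient assembled inductively from $p$-nilpotent quotients of the free factors in the amalgamation.

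The main obstacle will be this last step, the interface between the algebraic and the geometric inputs. The prime-power hypothesis directly constrains only the metabelian quotient $G'/G''$, so the plumbing decomposition must do the work of propagating residual nilpotency up the derived series. Concretely, one needs the amalgamating free subgroups coming from the lifts of $S$ to be compatible with the lower central series filtrations of the ambient free factors, so that the system of $p$-nilpotent quotients glues coherently. Verifying this compatibility, and arranging the induction on the Murasugi sum so that the relevant free subgroups remain well-behaved after summing, is where I would expect the bulk of the technical work to lie.
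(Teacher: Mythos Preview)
The paper does not prove this theorem: it is quoted as a result of Mayland and Murasugi \cite{mm76} and used as a black box in the proof of \Cref{thm:main_two}. So there is no ``paper's own proof'' to compare your proposal against.

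That said, your sketch drifts from what Mayland and Murasugi actually do, and the drift occurs at the points you yourself flag as uncertain. Their argument does not proceed by first proving a preservation statement for residual nilpotency under Murasugi sum (such a statement is false in general for amalgamated products), nor does it pass through the metabelian quotient $G'/G''$ and then try to ``promote'' upward. Instead, they exhibit the commutator subgroup $G'$ directly as an ascending union of finitely generated free groups $F_n$, coming from cutting along lifts of a pseudo-alternating Seifert surface, and analyze the inclusion maps $F_n \hookrightarrow F_{n+1}$. The prime-power hypothesis on the leading coefficient of $\Delta_K$ translates into the statement that these inclusions induce isomorphisms on $H_1(-;\F_p)$; Stallings' theorem then gives that each inclusion induces isomorphisms on all lower central series quotients mod $p$, and hence that $G'$ is residually a finite $p$-group (in particular residually nilpotent). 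The Murasugi-sum structure enters only in verifying that the Seifert surface is free and that the relevant homology computation behaves well, not as a separate inductive step with its own residual-nilpotency bookkeeping.

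So your plan has the right ingredients --- the free Seifert surface complement, the role of the leading coefficient, the ascending-union picture of $G'$ --- but the logical architecture is off: the argument is a single application of Stallings' theorem to a tower of free groups, not a two-stage reduction (metabelian first, then lift) glued along a Murasugi-sum induction.
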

	
	Next, we recall some background on knot Floer homology \cite{os04, ras03}. The hat version of knot Floer homology, $\widehat{HFK}$, associates a finitely generated, bigraded $\F_2$-vector space to any knot $K$:
	$$
	\widehat{HFK}(K) = \bigoplus_{i,j \in \Z} \widehat{HFK}_i(K,j).
	$$
	The $i$ and $j$ gradings are called the {\em Maslov} and {\em Alexander} gradings respectively, and the graded Euler characteristic of $\widehat{HFK}$ is the symmetrized Alexander polynomial:
	\begin{equation}
		\label{eq:euler}
		\Delta_K(t) = \sum_{i,j} (-1)^i \dim(\widehat{HFK}_i(K,j)) t^j.
	\end{equation}
	Generalizing the classical fact that $d(K) \leq 2g(K)$, knot Floer homology detects knot genus in the following sense \cite{os04b}:
	$$
	g(K) = \max \{j \mid \widehat{HFK}(K,j) \neq 0 \}.
	$$
	
	Any concordance $C \subset S^3 \times I$ between knots $K_0 \subset S^3 \times \{0\}$ and $K_1 \subset S^3 \times \{1\}$ induces a bigrading-preserving homomorphism
	$$
	C_* : \widehat{HFK}(K_0) \to \widehat{HFK}(K_1),
	$$
	 and Zemke proved that if $C$ is a ribbon concordance from $K_1$ to $K_0$, so $K_0 \leq K_1$, then the map $C_*$ is injective \cite{zem19}. Finally, we require the following theorem of Cheng, Hedden and Sarkar:

	\begin{thm}[{\cite[Corollary 1.6]{chs22}}]
		\label{thm:grading}
		If $K$ is a pseudo-alternating link, then the top Alexander grading $\widehat{HFK}(K, g(K))$ of $\widehat{HFK}(K)$ is supported in a single Maslov grading.
	\end{thm}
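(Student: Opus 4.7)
My plan is to prove this by induction on the number of summands in a Murasugi-sum decomposition of a pseudo-alternating Seifert surface for $K$, reducing to the base case of special alternating links. By definition of pseudo-alternating, $K$ bounds a surface $S$ that decomposes as an iterated Murasugi sum $S = S_1 * \cdots * S_n$, where each $S_i$ is the Seifert surface obtained from a special alternating projection of a special alternating link $K_i = \partial S_i$. By Gabai's theorem that Murasugi sum preserves the genus-minimizing property, together with the classical fact that special alternating projection surfaces realize the Seifert genus, $S$ is itself a minimal genus Seifert surface for $K$. Juh{\'a}sz's identification
\[
\widehat{HFK}(K, g(K)) \cong SFH\bigl(S^3 \setminus \nu(S)\bigr)
\]
as relatively Maslov-graded $\F_2$-vector spaces, with a similar identification for each $K_i$, then lets me translate the theorem into a statement about sutured Floer homology of the complements.

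For the base case, I would observe that a special alternating diagram is in particular an alternating diagram. By Ozsv{\'a}th--Szab{\'o}'s theorem that $\widehat{HFK}$ of any alternating link is supported on a single diagonal $i - j = \mathrm{const}$, each $\widehat{HFK}(K_i, g(K_i))$ lies in a single Maslov grading; equivalently, $SFH(S^3 \setminus \nu(S_i))$ is supported in a single relative Maslov grading.

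For the inductive step, I would invoke a Juh{\'a}sz-style gluing formula for sutured Floer homology under Murasugi sum. Since cutting $S^3$ along a Murasugi sum $S_1 *_D S_2$ produces a sutured manifold obtained by gluing the sutured complements of $S_1$ and $S_2$ along a product disk corresponding to $D$, Juh{\'a}sz's surface-decomposition and gluing theorems should yield a tensor-product formula
\[
SFH\bigl(S^3 \setminus \nu(S)\bigr) \cong \bigotimes_{i=1}^n SFH\bigl(S^3 \setminus \nu(S_i)\bigr)
\]
that respects the relative Maslov grading. Since each factor is supported in a single relative grading by the base case, so is the tensor product, and the theorem follows from Juh{\'a}sz's identification above.

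The main obstacle will be making the Murasugi-sum gluing formula precise at the level of Maslov gradings. While the tensor-product structure of $SFH$ under product-disk gluing is essentially standard in the sutured Floer toolkit, one must verify carefully that the pseudo-alternating decomposition of $S$ translates to such a gluing of sutured complements, and that the grading isomorphism is preserved on the nose rather than merely up to some unspecified shift. Once this is established, preservation of ``support in a single grading'' under tensor product is immediate, closing the induction.
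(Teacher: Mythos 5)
The paper does not prove this statement at all---it is imported verbatim from Cheng--Hedden--Sarkar \cite[Corollary 1.6]{chs22}---so there is no internal argument to compare against. Your outline does in fact mirror the strategy of that reference: induct on the Murasugi-sum decomposition of a pseudo-alternating surface, use Gabai's theorem to see the summed surface is minimal genus, identify $\widehat{HFK}(K,g(K))$ with $SFH$ of the complement of that surface via Juh\'asz, and settle the base case of special alternating links by Ozsv\'ath--Szab\'o thinness of alternating links. All of those ingredients are correctly deployed.

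The gap is the inductive step, and it is not a small one. The Maslov-graded tensor product formula
$$
SFH\bigl(S^3 \setminus \nu(S_1 * S_2)\bigr) \cong SFH\bigl(S^3\setminus\nu(S_1)\bigr) \otimes SFH\bigl(S^3\setminus\nu(S_2)\bigr)
$$
is not an ``essentially standard'' consequence of product-disk gluing. A Murasugi sum along a $2n$-gon $D$ in the splitting sphere $\Sigma$ produces the decomposing disk $\Sigma \setminus \nu(D)$ in the sutured complement, and the boundary of that disk crosses the sutures $2n$ times; it is a product disk only in the case $n=1$, i.e.\ connected sum. For $n \geq 2$, Juh\'asz's surface-decomposition machinery gives only that the tensor product of the summands' $SFH$ embeds as a direct summand of $SFH(S^3\setminus\nu(S))$---a rank inequality in one direction---and upgrading this to an isomorphism that also respects the Maslov grading is precisely the main theorem of \cite{chs22}, which remained open for years after Juh\'asz's work. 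As written, your argument therefore assumes the substance of the result it sets out to prove; to close it you would need to reproduce the Cheng--Hedden--Sarkar analysis of higher-gon Murasugi sums, or else simply cite their theorem, as the paper does.
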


	\Cref{thm:main_two} now follows easily from these results and the next proposition (cf.~\cite[Propositon 1.4]{bogr24}).
	
	\begin{prop}
		\label{prop:alex_1}
		Let $K_1$ be a pseudo-alternating knot such that $g(K) = g_4(K)$, and suppose $K_0 \leq K_1$. Then $\Delta_{K_0} = \Delta_{K_1}$.
	\end{prop}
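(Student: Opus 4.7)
The plan is to chain together three ingredients: \Cref{lem:genus} (to fix the Seifert genus), Zemke's injectivity theorem for ribbon concordance maps on $\widehat{HFK}$, and \Cref{thm:grading} (single Maslov support in top Alexander grading for pseudo-alternating knots). The upshot will be that the leading coefficients of $\Delta_{K_0}$ and $\Delta_{K_1}$ can be compared directly through Floer homology and must both be nonzero, after which Alexander polynomial divisibility and symmetry force the polynomials to agree.

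First, since $g(K_1) = g_4(K_1)$, \Cref{lem:genus} gives $g(K_0) = g(K_1) =: g$, and by the genus-detection property of knot Floer homology both $\widehat{HFK}(K_0, g)$ and $\widehat{HFK}(K_1, g)$ are nonzero. Zemke's theorem provides a bigrading-preserving injection $C_{\ast} \colon \widehat{HFK}(K_0) \hookrightarrow \widehat{HFK}(K_1)$, whose restriction to Alexander grading $g$ is again injective. By \Cref{thm:grading}, $\widehat{HFK}(K_1, g)$ is concentrated in a single Maslov grading $m$, and bigrading-preservation then forces $\widehat{HFK}(K_0, g)$ to be concentrated in the same Maslov grading $m$.

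Plugging into the Euler characteristic identity (\ref{eq:euler}), the coefficient of $t^g$ in each $\Delta_{K_i}$ equals $(-1)^m \dim \widehat{HFK}(K_i, g) \neq 0$, so by the symmetry $\Delta_K(t) = \Delta_K(t^{-1})$ both symmetrized Alexander polynomials have breadth exactly $2g$. We then invoke the classical divisibility $\Delta_{K_0} \mid \Delta_{K_1}$ recalled in the introduction: equality of breadths forces the quotient in $\Z[t, t^{-1}]$ to be a monomial, and evaluating at $t = 1$ together with the symmetry of both factors pins the quotient down to $1$, yielding $\Delta_{K_0} = \Delta_{K_1}$.

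Since the proof is an assembly of three cited results, there is no serious obstacle. The only point that warrants care is verifying that the top Alexander coefficient of $\Delta_{K_0}$ does not vanish: this relies on $g(K_0) = g$ (so that $\widehat{HFK}(K_0, g) \neq 0$) together with the single-Maslov-grading property inherited via the injection from $K_1$, which rules out cancellation in the graded Euler characteristic at Alexander grading $g$ and thereby lets the breadth comparison close the argument.
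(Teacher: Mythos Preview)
Your proof is correct and follows essentially the same route as the paper: equalize genera via \Cref{lem:genus}, use Zemke's injection together with \Cref{thm:grading} to force the top Alexander coefficient of $\Delta_{K_0}$ to be nonzero (hence $d(K_0)=d(K_1)=2g$), and then conclude from $\Delta_{K_0}\mid\Delta_{K_1}$ and $\Delta_{K_i}(1)=1$. The only difference is cosmetic---you spell out the breadth/monomial-quotient step a bit more explicitly than the paper does.
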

	
	\begin{proof}
		By \Cref{lem:genus}, $g(K_0) = g(K_1) = g$ for some $g \in \N$. Fix a ribbon concordance $C \subset S^3 \times I$ from $K_1$ to $K_0$, and let $C_*$ denote the induced map
		$$
		C_* : \widehat{HFK}(K_0, g) \to \widehat{HFK}(K_1, g).
		$$
		This map is injective by Zemke's result, and both groups are nonzero since the knots have the same genus. By \Cref{thm:grading} $\widehat{HFK}(K_1;g)$ is supported in a single Maslov grading, and thus $\widehat{HFK}(K_0;g)$ is as well. Consequently, (\ref{eq:euler}) implies that
		$$
		d(K_0) = 2g(K_0) = 2g(K_1) = d(K_1).
		$$
		Since $\Delta_{K_0}$ divides $\Delta_{K_1}$, we have $\Delta_{K_0} = m\Delta_{K_1}$ for some $m \in \Z$ \cite{gil84}. But
		$$
		\Delta_{K_0}(1) = \Delta_{K_1}(1) = 1,
		$$
		so $\Delta_{K_0} = \Delta_{K_1}$.
	\end{proof}

	\begin{proof}[Proof of \Cref{thm:main_two}.]
		Suppose $K_0$ and $K_1$ are knots such that $K_1$ satisfies the hypotheses of the theorem and $K_0 \leq K_1$. By \Cref{prop:alex_1}, $\Delta_{K_0} = \Delta_{K_1}$. Additionally $K_1$ is residually nilpotent by \Cref{thm:mm}, so \Cref{lem:gordon} implies $K_1 \cong K_0$.
	\end{proof}

	Two-bridge knots are residually nilpotent by a theorem of Johnson \cite{joh21}. Thus our proof of \Cref{thm:main_two} also gives an alternate proof of Tagami's theorem that positive two-bridge knots are ribbon concordance minimal, using \cite[Corollary 1.3]{joh21} in place of \Cref{thm:mm}.

	\section{Alexander Modules and $\Q$-Anisotropy}
	\label{sec:concordance}
	
	We now work toward the proofs of \Cref{thm:main_three} and \Cref{cor:genus}. We expect the following proposition is known to experts, but we have not been able to find it in the literature.
	
	\begin{prop}
		\label{prop:roots}
		Let $K \subset S^3$ be a knot such that $\Delta_K$ has a rational root $q$. Then $q = (a -1 )/a$ for some integer $a \notin \{0,1\}$. In particular, $q$ is positive.
	\end{prop}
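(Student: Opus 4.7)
The plan is to combine the two defining constraints on the Alexander polynomial of a knot---the normalization $\Delta_K(1) = \pm 1$ and the symmetry $\Delta_K(t) \doteq \Delta_K(t^{-1})$---with the rational root theorem. After writing $\Delta_K \in \bZ[t]$ in its normalized form with no negative exponents, the symmetry forces the constant term $a_0$ to agree up to sign with the leading coefficient $a_{d(K)}$; in particular $a_0 \neq 0$, so $t = 0$ is not a root and the rational root theorem applies meaningfully.

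Suppose now that $q = p/r$ is a rational root in lowest terms with $r > 0$. The linear polynomial $rt - p \in \bZ[t]$ is primitive, so by Gauss's lemma the quotient $\Delta_K(t)/(rt - p)$, which a priori lies in $\bQ[t]$, actually lies in $\bZ[t]$. Writing
\[
  \Delta_K(t) = (rt - p)\,G(t), \qquad G \in \bZ[t],
\]
and evaluating at $t = 1$ gives $\pm 1 = (r - p)\,G(1)$. Since $G(1) \in \bZ$, this forces $r - p \in \{-1, +1\}$.

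If $r - p = 1$, then $q = (r-1)/r$, which is of the desired form with $a = r$. If $r - p = -1$, then $q = (r+1)/r$, and a direct check shows $(r+1)/r = (a-1)/a$ with $a = -r$. In either case $a$ is a nonzero integer since $r \geq 1$, and $a = 1$ would force $q = 0$, contradicting $a_0 \neq 0$. Positivity of $q$ then follows from the elementary observation that $(a-1)/a > 0$ for every integer $a \notin \{0,1\}$. The only real subtlety is the integrality of the cofactor $G$ via Gauss's lemma; once that is in hand, the rest of the argument is a one-line evaluation at $t = 1$ together with the case split above.
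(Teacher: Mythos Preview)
Your proof is correct and takes a genuinely different route from the paper's. The paper argues geometrically: fixing a Seifert surface $S$ for $K$ with push-off maps $\iota_\pm : H_1(S) \to H_1(S^3 - S)$, a rational root $q = a/b$ yields a primitive class $v \in H_1(S)$ in the kernel of $b\,\iota_+ - a\,\iota_-$; choosing $w \in H_1(S)$ with $w \cdot v = 1$ and using the identity $w \cdot v = \mathrm{lk}(w,(\iota_+ - \iota_-)(v))$ then forces $q = (\ell - 1)/\ell$, where $\ell = \mathrm{lk}(w,\iota_+(v))$ is an integer linking number. Your argument is instead purely algebraic, extracting the conclusion directly from the two classical constraints $\Delta_K(1) = \pm 1$ and $\Delta_K(t) \doteq \Delta_K(t^{-1})$ via Gauss's lemma and an evaluation at $t = 1$. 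Your approach is shorter and more elementary, and makes transparent that the result depends only on these two formal properties of $\Delta_K$; the paper's approach, by contrast, identifies the integer $a$ concretely as a linking number, which carries geometric meaning but is not needed for the proposition as stated.
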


	\begin{proof}
		Let $q = a/b$ for $a, b \in \Z$. Fix an oriented Seifert surface $S$ for $K$, and let 
		$$
		\iota_\pm : H_1(S) \to H_1(S^3 - S)
		$$
		be the maps induced by pushing curves off $S$ to the $\pm$-component of the unit normal bundle of $S$, with sign determined by the orientation. Since Alexander duality yields a canonical isomorphism $H_1(S^3 - S) \cong H_1(S)$, it makes sense to discuss the determinants of $\iota_+$ and $\iota_-$.
		
		Now 
		$$
		\Delta_K(t) = \det(\iota_+ - t\iota_-),
		$$
		and $\Delta_K(q) = 0$ implies that
		$$
		0 = \det(b\iota_+ - a\iota_-).
		$$
		Therefore there exists a non-zero vector $v \in H_1(S)$ such that
		$$
		b\iota_+(v) = a\iota_-(v).
		$$
		Dividing by a scalar if necessary, we assume $v$ is primitive, i.e.~that $v$ extends to a basis of $H_1(S)$.
		
		The intersection pairing
		$$
		\cdot : H_1(S) \times H_1(S) \to \Z
		$$
		satisfies the identity
		$$
		v_1 \cdot v_2 = \text{lk}(v_1, (\iota_+ - \iota_-)(v_2))
		$$
		for all $v_1, v_2 \in H_1(S)$, where lk indicates the linking number. Since $v$ is a primitive homology class on a once-punctured surface, $v$ is representable by a simple, closed non-separating curve on $S$ \cite{sch76,mey76}, and it follows that there exists $w \in H_1(S)$ such that $w \cdot v = 1$.
		
		We have
		$$
			1 = \text{lk}(w, (\iota_+ - \iota_-)(v)) = \text{lk}(w, \iota_+(v)) - \text{lk}(w, \iota_-(v))
		$$
		and therefore
		$$
		a\text{lk}(w, \iota_+(v)) = b\text{lk}(w, \iota_-(v)) = b(\text{lk}(w, \iota_+(v)) - 1).
		$$
		Since $a$ and $b$ are both non-zero, lk$(w, \iota_+(v)) \notin \{0,1\}$. We conclude that
		$$
		q = \frac{a}{b} = \frac{\text{lk}(w, \iota_+(v)) - 1}{\text{lk}(w, \iota_+(v))}
		$$
		as desired.
	\end{proof}
	
	\begin{cor}
		\label{cor:roots}
		If $K$ is a positive knot, then $\Delta_K$ has no rational roots.
	\end{cor}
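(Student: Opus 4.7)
The plan is to combine \Cref{prop:roots} with the classical positivity of the Conway polynomial coefficients for a positive knot. By \Cref{prop:roots}, any rational root of $\Delta_K$ has the form $q = (a-1)/a$ for some integer $a \notin \{0,1\}$ and is, in particular, a positive real number. Hence the corollary reduces to showing that $\Delta_K(t) > 0$ for every $t > 0$.

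To establish this, I would appeal to the well-known fact that the Conway polynomial $\nabla_K(z)$ of a positive knot has non-negative coefficients---a result that fits naturally into Cromwell's framework of homogeneous links \cite{cro89}. For any knot $K$, $\nabla_K(z)$ is a polynomial in $z^2$ whose constant term is $\nabla_K(0) = \Delta_K(1) = 1$, so non-negativity of its coefficients immediately yields $\nabla_K(z) \geq 1$ for every real $z$. Since $z = t^{1/2} - t^{-1/2}$ is real whenever $t > 0$ and satisfies $\Delta_K(t) = \nabla_K(z)$, we obtain $\Delta_K(t) \geq 1 > 0$ for all $t > 0$, completing the argument.

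The only step requiring real care is the non-negativity of the Conway coefficients. This is a standard fact for positive knots that can be cited directly from Cromwell's work, or reproved by a short induction on crossing number using the skein relation $\nabla_{L_+}(z) = \nabla_{L_-}(z) + z \nabla_{L_0}(z)$, exploiting the way in which smoothing a positive crossing in a positive diagram yields a positive link of lower complexity. Once this input is accepted, everything else in the proof follows immediately from \Cref{prop:roots}.
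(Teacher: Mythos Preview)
Your proposal is correct and follows essentially the same route as the paper: both reduce via \Cref{prop:roots} to ruling out positive real roots of $\Delta_K$, invoke Cromwell's non-negativity of the Conway coefficients together with $\nabla_K(0)=1$ to conclude $\nabla_K$ is strictly positive on the reals, and then transfer this to $\Delta_K$ via the substitution $z=t^{1/2}-t^{-1/2}$. The only cosmetic difference is that you phrase the conclusion as $\Delta_K(t)\geq 1$ for $t>0$, while the paper phrases it as ``$\nabla_K$ has no real roots, hence $\Delta_K$ has no positive real roots.''
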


	\begin{proof}
		Let $K$ be positive. The {\em Conway polynomial} of $K$, $\nabla_K(z) \in \Z[z^2]$, is the unique polynomial satisfying
		$$
		\nabla_K(x - x^{-1}) = \Delta_K(x^2)
		$$
		where $\Delta_K$ is the symmetrized Alexander polynomial. Cromwell proved that for a positive knot $K$, the coefficients of $\nabla_K$ are non-negative \cite[Corollary 2.1]{cro89}. Furthermore
		$$
		\nabla_K(0) = \Delta_K(1) = 1,
		$$
		so $\nabla_K$ has no real roots. It follows that $\Delta_K$ has no positive real roots, since a positive real root $q$ of $\Delta_K$ would yield a real root $\sqrt{q} - 1/\sqrt{q}$ of $\nabla_K$. Therefore, by \Cref{prop:roots}, $\Delta_K$ has no rational roots.
	\end{proof}

	Let $\bar{X}$ denote the infinite cyclic cover of the exterior of a knot $K$. For any field $\F$, an {\em invariant $\F$-isotropic subspace} of $H^1(\bar{X}; \F) \cong H^1(\bar{X}, \partial \bar{X}; \F)$ is one which is preserved by the action of deck transformations and self-annihilating with respect to the cup product
	$$
	\smile : H^1(\bar{X}; \F) \times H^1(\bar{X}, \partial \bar{X}; \F) \to H^2(\bar{X}, \partial \bar{X}; \F) \cong \F.
	$$
	The knot $K$ is called {\em $\F$-anisotropic} if $H^1(\bar{X};\F)$ does not contain a non-trivial invariant $\F$-isotropic subspace. As the introduction discusses, $\Q$-anisotropy can be used to restrict changes to the Alexander module under concordance: Kervaire and Gilmer proved that if (algebraically) concordant knots $K$ and $K'$ are $\Q$-anisotropic and admit Seifert matrices which are invertible over $\Q$, then the rational Alexander modules of $K$ and $K'$ are isomorphic \cite[Proposition 4.2]{gil84} \cite{ker71}.

	\begin{prop}
		\label{prop:q_anis}
		If a positive knot $K$ satisfies $|\sigma(K)| \geq d(K) - 2$, then $K$ is $\Q$-anisotropic.
	\end{prop}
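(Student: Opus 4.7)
The plan is to use the primary decomposition of the rational Alexander module. Write $H^1(\bar{X}; \Q) = \bigoplus_\lambda M_\lambda$ as a $\Q[t^{\pm 1}]$-module, where $\lambda$ ranges over the monic irreducible factors of $\Delta_K$ in $\Q[t]$. The cup product pairs $M_\lambda$ non-trivially only with $M_{\lambda^*}$, where $\lambda^*$ is the reciprocal of $\lambda$, so any invariant $\Q$-isotropic subspace $W$ must respect this decomposition, and its restriction to each reciprocal primary summand $M_\lambda$ must be isotropic for the Hermitian Milnor form on $M_\lambda$ over the field $K_\lambda = \Q[t]/(\lambda)$ (with involution $t \mapsto t^{-1}$). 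Hence it suffices to rule out two possibilities: non-reciprocal factors of $\Delta_K$, and non-zero isotropic vectors in the Hermitian Milnor form on some reciprocal primary summand.

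I would then run a slack-budget argument comparing $|\sigma(K)|$ to $d(K)$ contribution-by-contribution. By \Cref{cor:roots}, every irreducible factor of $\Delta_K$ has degree at least $2$. A pair $\{\lambda, \lambda^*\}$ of non-reciprocal factors contributes at least $4$ to $d(K)$ and $0$ to $|\sigma(K)|$, so costs at least $4$ of slack. A reciprocal $\lambda$ with roots off the unit circle contributes $a(\lambda)\deg \lambda$ to slack and $0$ to $|\sigma(K)|$; this is at least $4$ unless $a(\lambda)=1$ and $\deg\lambda=2$, in which case $M_\lambda$ is one-dimensional over $K_\lambda$ and the Hermitian form, being a nonzero norm form, is automatically anisotropic. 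For a reciprocal $\lambda$ with roots on the unit circle, the Hermitian Milnor form on $M_\lambda$ decomposes at each of the $\deg\lambda/2$ real places of the totally real subfield $K_\lambda^+$ as a rank-$a(\lambda)$ Hermitian form over $\C$ with local signature $(p_j, q_j)$, and $|\sigma_\lambda| \leq a(\lambda)\deg\lambda$, with equality only when each local form is definite. A non-zero global isotropic vector in $M_\lambda$ localizes to a non-zero isotropic vector at some place $j$, forcing $\min(p_j, q_j) \geq 1$ there and thereby reducing $|\sigma_\lambda|$ by at least $4$; so any on-circle isotropy also costs slack at least $4$.

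Since the hypothesis gives total slack at most $2$, the accounting in the previous paragraph forbids non-reciprocal factors and forbids isotropy in every reciprocal primary summand. As the summands are mutually orthogonal under the cup product, the whole rational Alexander module admits no non-trivial invariant $\Q$-isotropic subspace, so $K$ is $\Q$-anisotropic.

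The main obstacle is the precise dictionary between $\sigma(K)$ and the local Hermitian signatures $(p_j, q_j)$ on each primary summand. Making this explicit requires Milnor's decomposition of the Seifert pairing into isotypic components and the identification of real places of $K_\lambda^+$ with pairs of conjugate unit-circle roots of $\lambda$. Once the identity $\sigma(K) = \sum_\lambda \sigma_\lambda$, with $\sigma_\lambda$ expressed as a weighted sum of $(p_j - q_j)$, is in hand, the slack accounting is elementary linear algebra.
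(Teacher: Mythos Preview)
Your strategy is sound, but it is considerably more elaborate than the paper's. The paper avoids the primary decomposition entirely and argues globally: given a non-trivial invariant $\Q$-isotropic subspace $\Lambda \subset H^1(\bar X;\Q)$, \Cref{cor:roots} forces $\dim_\Q \Lambda \geq 2$, since a one-dimensional invariant subspace would be an eigenline of $t$ and hence produce a rational root of $\Delta_K$. One then observes that $\Lambda$ is also isotropic for the symmetric Milnor form $\mu(v,w)=t(v)\smile w + t(w)\smile v$, and since $\sigma(K)=\sigma(\mu)$, the elementary bound $|\sigma(\mu)| \leq \dim H^1(\bar X;\Q) - 2\dim \Lambda$ for a non-degenerate symmetric form yields $|\sigma(K)| \leq d(K)-4$, contradicting the hypothesis. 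That is the whole proof.

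Your slack-budget bookkeeping is essentially this same inequality, decomposed along the primary pieces and repackaged via Hermitian forms over the fields $K_\lambda$. What your route would buy is finer structural information about \emph{which} isotypic components can carry invariant isotropy---potentially useful for sharpening the proposition or attacking \Cref{conj:two}---but for the statement as given, the Hermitian machinery and the localization at real places of $K_\lambda^+$ are unnecessary. Two small points if you do pursue it: a globally isotropic vector over $K_\lambda$ localizes to an isotropic vector at \emph{every} real place, not just ``some'' place (this only strengthens your estimate); and the dictionary you flag as the main obstacle---the identity $\sigma(K)=\sum_\lambda \sigma_\lambda$ and its expression via local Hermitian signatures---is exactly what the paper's global Milnor-form argument sidesteps.
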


	\begin{proof}
		Let $\bar{X}$ denote the infinite cyclic cover of the exterior of $K$, and let 
		$$
		t : H^1(\bar{X}; \Q) \to  H^1(\bar{X}; \Q) 
		$$
		be the map induced by a primitive deck transformation. Let $\Lambda \subset H^1(\bar{X}; \Q)$ be a non-trivial invariant $\Q$-isotropic subspace of $H^1(\bar{X}; \Q)$.
		
		Up to a scalar, the characteristic polynomial of $t$ coincides with $\Delta_K$. Since the cup product is skew-symmetric, it is straightforward to check that $H^1(\bar{X};\Q)$ contains a one-dimensional invariant $\Q$-isotropic subspace if and only if $\Delta_K$ has a rational root, i.e.~if and only if $t$ has a rational eigenvalue (cf.~\cite[Proposition 4.3]{gor81}). Thus, \Cref{cor:roots} implies dim$(\Lambda) \geq 2$.
		
		We now consider the Milnor form \cite{mil68} $\mu$ on $H^1(\bar{X};\Q)$, defined by
		$$
		\mu(v,w) = t(v) \smile w + t(w) \smile v
		$$
		for $v, w \in H^1(\bar{X};\Q)$. As Gordon observes and is easy to check, $\Lambda$ is also a self-annihilating subspace for $\mu$ \cite[Proposition 4.5]{gor81}. Let $V_\pm$ denote a maximal subspace of $H^1(\bar{X};\Q)$ on which $\mu$ is $\pm$-definite. Then $V_\pm \cap \Lambda = \{0\}$, so
		$$
		\dim V_\pm \leq \dim H^1(\bar{X};\Q) - \dim \Lambda \leq  \dim H^1(\bar{X};\Q) - 2 = d(K) - 2.
		$$
		It follows that
		$$
		|\sigma(K)| = |\sigma(\mu)| \leq d(K) - 4,
		$$
		and since $d(K)$ and $\sigma(K)$ are even this implies the desired inequality.
	\end{proof}

	\begin{rmk}
		It is not true that positive knots are $\R$-anisotropic, even if they satisfy the hypothesis of \Cref{prop:q_anis}: for example, the Alexander polynomial of the positive knot $10_{139}$ has a negative real root. It is also not true that strongly quasi-positive knots are $\Q$-anistropic, since there exist strongly quasi-positive knots which are topologically slice---see, for example, \cite{bfll18}.
	\end{rmk}

	\begin{proof}[Proof of \Cref{thm:main_three}]
		Let $K$ and $K'$ be concordant positive knots such that $|\sigma(K)| \geq d(K) - 2$. Then $\sigma(K) = \sigma(K')$, and since $K$ and $K'$ are positive we have
		$$
		d(K) = g_4(K) = g_4(K') = d(K').
		$$
		It follows that $|\sigma(K')| \geq d(K') - 2$, so $K$ and $K'$ are both $\Q$-anisotropic by \Cref{prop:q_anis}. Since $g(K) = d(K)$ and $g(K') = d(K')$, any Seifert matrix of a minimal genus Seifert surface for $K$ or $K'$ is invertible over $\Q$. Thus, by the result of Kervaire and Gilmer discussed before \Cref{prop:q_anis}, the rational Alexander modules of $K$ and $K'$ are isomorphic.
	\end{proof}

	\begin{proof}[Proof of \Cref{cor:genus}]
		Since the knot $14_{45657}$ is only the positive knot satisfying $g(K) = 4$ and $\sigma(K) = -4$, $14_{45657}$ is not concordant to any other positive knot. The corollary then follows from \Cref{thm:main_three} and the discussion in the introduction.
	\end{proof}
		
	\bibliography{main_bib}{}
	\bibliographystyle{amsplain}
	
\end{document}